\newtheorem{theorem}{Theorem}[section]
\newtheorem{definition}{Definition}[section]
\newtheorem{lemma}{Lemma}[section]
\newtheorem{proposition}{Proposition}[section]
\newtheorem{remark}{Remark}[section]
\newtheorem{example}{Example}[section]
\newtheorem{corollary}{Corollary}[section]
\newcommand\restr[2]{{
		\left.\kern-\nulldelimiterspace 
		#1 
		\vphantom{\big|} 
		\right|_{#2} 
}}
\begin{document}

\title[ positive solutions for singular quasilinear problems]{Existence, nonexistence and multiplicity of positive solutions for singular quasilinear problems}

\author{\bf\large Ricardo Lima Alves }\footnote{\thanks{The author was supported by Paraíba State Research Foundation (FAPESQ), Brazil: grant 1301/2021, Paraíba State Research Foundation (FAPESQ) }  } \hspace{2mm}
 

\maketitle
\begin{center}
		{\bf\small Abstract}
		
		\vspace{3mm} \hspace{.05in}\parbox{4.5in} {{\small In the present paper we deal with a quasilinear problem involving a singular term and a parametric superlinear perturbation.	We are interested in the existence, nonexistence and multiplicity of positive solutions as the parameter $\lambda>0$ varies. In our first result, the superlinear perturbation has an arbitrary growth and we obtain the existence of a solution for the problem by using the sub-supersolution method. For the second result, the superlinear perturbation has
				subcritical growth and we employ the Mountain Pass Theorem to show the existence of a second solution. } }
	\end{center}

\noindent
{\it \footnotesize 2020 Mathematics Subject Classification}. {\scriptsize 35A01, 35A15, 35A16, 35B09   }.\\
{\it \footnotesize Key words}. {\scriptsize      Extended functional, Sub-supersolution method, Singular problem, Variational methods.}

%
%
%
\section{\bf Introduction}
\def\theequation{1.\arabic{equation}}\makeatother
\setcounter{equation}{0}

This paper is concerned with the existence, nonexistence and multiplicity of solutions for the family of quasilinear problems with singular nonlinearity
\begin{equation*}
\label{pq}\tag{$P_{\lambda}$}\left\{
\begin{array}{l}
-\Delta u -\Delta (u^{2})u=a(x) u^{-\gamma} + \lambda u^{p}~in ~ \Omega,\\
u> 0~in~ \Omega,\\ u(x)=0~~on~~\partial \Omega,
\end{array}
\right.
\end{equation*}
where $0<\gamma,3\leq p<\infty,0\leq\lambda$ is a parameter, $\Omega \subset \mathbb{R}^{N} (N\geq 3)$ is a bounded smooth domain and $a(x)$ is a positive measurable function.

We say that a function $u \in H_{0}^{1}(\Omega)\cap L^{\infty}(\Omega)$ is a weak solution (solution, for short) of \eqref{pq} if $u>0$ a.e. in $\Omega$, and, for every $\psi \in H^{1}_{0}(\Omega)$,
$$
	au^{-\gamma}\psi, u^{p}\psi \in L^{1}(\Omega)
$$
and
$$
\displaystyle \int_{\Omega} [(1+2u^{2})\nabla u \nabla \psi+2u\vert \nabla u\vert^{2}\psi] =\displaystyle \int_{\Omega} a(x)u^{-\gamma} \psi+\lambda\displaystyle \int_{\Omega}u^{p}\psi .
$$

Solutions of this type are related to the existence of standing wave solutions for quasilinear Schr\"odinger equations of the form
\begin{equation}\label{0}
i \partial_{t}z=-\Delta z +V(x)z+\eta(\vert z \vert^{2} )z-\kappa \Delta \rho(\vert z \vert^{2} )\rho^{\prime}(\vert z \vert^{2} )z,
\end{equation}
where $z:\mathbb{R}\times \Omega \to \mathbb{C}$, $V(x)$ is a given potential, $\kappa>0$ is a constant and $\eta,\rho$ are real functions. Quasilinear equations of form \eqref{0} appear more naturally in mathematical physics and have been derived as models of several physical phenomena corresponding to various types of $\rho$. The case of $\rho(s)=s$ was used for the superfluid film equation in plasma physics by Kurihara \cite{K} (cf. \cite{LSS}). In the case $\rho(s)=(1+s)^{1/2}$, equation \eqref{0} models the self-channeling of a high-power ultrashort laser in matter, see \cite{BG,BMMLB,CS,BR} and the references in \cite{BHS}. 

Consider the following quasilinear Schr\"odinger equation
\begin{equation}\label{00}
-\Delta u -\Delta (u^{2})u=g(x,u)~\mbox{in} ~ \Omega,
\end{equation}
where $\Omega \subset \mathbb{R}^{N}$ is a bounded domain with smooth boundary $\partial \Omega$. When $g$ is a singular nonlinearity, problems of type (\ref{00}) was studied by Do \'O--Moameni \cite{DM}, Liu--Liu--Zhao \cite{JDP}, Wang \cite{W}, Dos Santos--Figueiredo--Severo \cite{SFS}, Alves-Reis \cite{AR} and  Bal-Garain-Mandal- Sreenadh \cite{BGMS}. In particular,  the authors in \cite{JDP} considered the problem
\begin{equation}\label{000}
	\left\{
	\begin{array}{l}
		-\Delta_{s} u -\frac{s}{2^{s-1}}\Delta(u^{2})u=a(x)u^{-\gamma}+\lambda u^{p}~\mbox{in} ~ \Omega,\\
		u>0~\mbox{in} ~ \Omega,\\ u=0~\mbox{on}~\partial \Omega,
	\end{array}
	\right.
\end{equation}
where $N\geq 3$, $\Delta_{s}$ is the $s$-Laplacian operator, $2<2s<p+1<\infty$, $0<\gamma$ and $a \geq 0$ is a nontrivial measurable function satisfying the following condition: 
\begin{itemize}
	\item[(H)] There are $\varphi \in C_{0}^{1}(\overline{\Omega})$ and $q>N$ such that $\varphi>0$ on $\Omega$ and $a\varphi^{-\gamma} \in L^{q}(\Omega)$. 
\end{itemize}

 The authors used sub-supersolution method, truncation arguments and variational methods to prove the existence of solutions for \eqref{000} provided $\lambda>0$ is small enough.

In \cite{SFS},  Dos Santos--Figueiredo--Severo studied the problem
\begin{equation}\label{10}
\left\{
\begin{array}{l}
-\Delta u -\Delta (u^{2})u=a(x) u^{-\gamma}+z(x,u)~\mbox{in} ~ \Omega,\\
u>0~\mbox{in}~\Omega,\\
u=0~\mbox{on}~\partial \Omega,
\end{array}
\right.
\end{equation}
where $N\geq 3$, the function $a$ satisfies the hypothesis $(H)$ and the nonlinearity $z :\Omega \times \mathbb{R} \longrightarrow \mathbb{R}$ is continuous and satisfies (among other conditions):\\
There exist $C>0,r\geq 1$ and $b\in L^{\infty}(\Omega),b\geq 0$ almost everywhere in $\Omega$, such that 
$$\vert z(x,t)\vert \leq C(1+b(x)\vert t\vert^{r-1}),\forall t\in \mathbb{R}~\mbox{and a.e. in}~\Omega.$$

 By using sub-supersolution method, truncation arguments and the Mountain Pass Theorem they showed the existence of  solutions provided $\Vert b\Vert_{\infty}$ is small enough. When $z(x,t)=\lambda \vert t\vert^{r-2}t$ this is equivalent to the existence of  solutions for $\lambda>0$ small enough.

  In this paper, our first goal is to show the existence and nonexistence of solutions for \eqref{pq} without restriction on the parameter $\lambda$ and exponent $p\geq 3$.  We would like to emphasize that for $0<p<3$ the arguments carried out in \cite{AR,R} can be adapted to prove that problem \eqref{pq} has at least one solution for all $\lambda \in \mathbb{R}$.

It is worth pointing out that to prove our main results, we use the method of changing variables developed in Colin--Jeanjean \cite{CJ}. Thus, we reformulate problem \eqref{pq} into a new one which finds its natural setting in the Sobolev space $H_{0}^{1}(\Omega)$ (see problem \eqref{pa} in Section 2).

Our first result is the following.

\begin{theorem}\label{T1}
	Under the assumptions $(H)$ and $p\geq3$ there exists $0<\lambda_{\ast}<\infty$ such that problem \eqref{pq} has at least one solution $v_{\lambda}$ for $0<\lambda< \lambda_{\ast}$ and no solution for $\lambda > \lambda_{\ast}$. Moreover, $\lambda_{\ast}$ is characterized variationally by \eqref{ext} and $v_{\lambda}\in C_{0}^{1}(\overline{\Omega})$.
\end{theorem}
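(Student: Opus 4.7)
The plan is to apply the Colin--Jeanjean change of variables $u=f(v)$, with $f'(t)=(1+2f(t)^{2})^{-1/2}$ (so $f(t)\sim t$ near $0$ and $f(t)\sim t^{1/2}$ at infinity), to recast \eqref{pq} as an equivalent semilinear problem
\[
-\Delta v \;=\; a(x)\,\frac{f(v)^{-\gamma}}{f'(v)} \;+\; \lambda\,\frac{f(v)^{p}}{f'(v)}\quad\text{in }\Omega, \qquad v=0\text{ on }\partial\Omega,
\]
whose weak formulation lives naturally in $H_{0}^{1}(\Omega)$. I set
\[
\Lambda \;:=\; \{\lambda>0 : \eqref{pq}\text{ admits a solution}\},\qquad \lambda_{\ast}:=\sup\Lambda,
\]
and reduce the theorem to proving (i)~$\lambda_{\ast}>0$, (ii)~$(0,\lambda_{\ast})\subset\Lambda$, (iii)~$\lambda_{\ast}<\infty$, (iv)~the variational characterization, and (v)~the regularity $v_{\lambda}\in C_{0}^{1}(\overline{\Omega})$.

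For (i) and (ii) I would apply the sub-supersolution method to the transformed problem. A subsolution $\underline{v}$, independent of $\lambda$, is obtained from the purely singular problem $(\lambda=0)$; its solvability and the crucial integrability $a(x)\underline{v}^{-\gamma}\in L^{q}(\Omega)$ with $q>N$ follow from hypothesis $(H)$ together with the comparison $\underline{v}\geq c\varphi$. A supersolution $\overline{v}$ is then constructed, for $\lambda$ small, as a large multiple of a torsion-type function that dominates the right-hand side; the point is that as long as $v$ is bounded, the term $\lambda f(v)^{p}/f'(v)$ is also bounded, so choosing $\overline{v}$ large and $\lambda$ small yields a supersolution. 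Standard iteration produces a solution $\underline{v}\leq v_{\lambda}\leq\overline{v}$, hence $\Lambda\neq\emptyset$. The monotonicity (ii) follows because if $v_{\lambda_{0}}$ solves \eqref{pq} at level $\lambda_{0}$ and $0<\lambda<\lambda_{0}$, the pair $(\underline{v},v_{\lambda_{0}})$ is still sub-super for \eqref{pq} thanks to the monotone dependence of the nonlinearity on~$\lambda$.

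For (iii) I would test the transformed equation against the first Dirichlet eigenfunction $\varphi_{1}$ of $-\Delta$. The comparison $v_{\lambda}\geq\underline{v}$ supplies a uniform lower bound $\int_{\Omega}v_{\lambda}\varphi_{1}\geq c_{0}>0$ with $c_{0}$ independent of~$\lambda$. Testing yields
\[
\lambda_{1}\!\int_{\Omega}\! v_{\lambda}\varphi_{1} \;\geq\; \lambda\!\int_{\Omega}\! h(v_{\lambda})\varphi_{1}, \qquad h(s):=f(s)^{p}\sqrt{1+2f(s)^{2}},
\]
and since $p\geq 3$ forces $h(s)\sim s^{(p+1)/2}\gtrsim s^{2}$ at infinity, a Jensen-type argument applied to the convex map $h$ on the relevant range, combined with the uniform lower bound $c_{0}$, produces a universal upper bound on $\lambda$, whence $\lambda_{\ast}<\infty$. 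For (iv), the characterization \eqref{ext} is obtained by introducing the extended functional associated to the transformed problem (handling the singular term by splitting against $\underline{v}$ as in \cite{AR}); $\lambda_{\ast}$ is then shown to coincide with the sharp threshold below which this functional is bounded from below on the admissible convex set $\{v\in H_{0}^{1}(\Omega) : v\geq\underline{v}\}$. Finally, (v) follows from Moser iteration to get $v_{\lambda}\in L^{\infty}(\Omega)$, hypothesis $(H)$ to place the right-hand side in $L^{q}(\Omega)$ with $q>N$, and classical $C^{1,\alpha}$-elliptic regularity, pulled back through $f$.

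The main obstacle is step (iii). The restriction $p\geq 3$ enters essentially there because only then does the composed nonlinearity $h$ grow at least quadratically at infinity (recall $f(s)\sim s^{1/2}$), which is what allows the Jensen estimate to translate the uniform lower bound on $\int v_{\lambda}\varphi_{1}$ into a universal ceiling on $\lambda$. For $0<p<3$ this argument degenerates, which is consistent with the remark in the introduction that that regime is handled by the different methods of \cite{AR,R} and yields solutions for every $\lambda\in\mathbb{R}$.
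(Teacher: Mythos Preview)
Your architecture (change of variables, sub--supersolution, eigenfunction test, $C^{1}$ regularity) is the paper's, but two steps have genuine gaps. First, the transformed equation is miswritten: after $u=f(v)$ with $f'=(1+2f^{2})^{-1/2}$ one obtains $-\Delta v=[a\,f(v)^{-\gamma}+\lambda f(v)^{p}]\,f'(v)$, i.e.\ \emph{multiplication} by $f'$, not division (check: $(1+2f^{2})(f')^{2}=1$ makes the Dirichlet energy $\int|\nabla v|^{2}$, and the chain rule on the potential brings down one factor of $f'$). Hence the correct superlinear term is $h(s)=f(s)^{p}f'(s)\sim s^{(p-1)/2}$ at infinity, not $s^{(p+1)/2}$. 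At the endpoint $p=3$ this is only asymptotically linear, so your Jensen argument---which discards the singular term and uses only $h$---cannot bound $\lambda$. The paper's nonexistence proof (Lemma~\ref{L2}) keeps \emph{both} terms: it shows that $t\mapsto[g^{-\gamma}(t)+\lambda g^{p}(t)]g'(t)/t$ has infimum tending to $\infty$ with $\lambda$ (the singular part controls small $t$, the power part large $t$, and their sum has a minimum that grows with $\lambda$); the weight $a(x)$ on the singular term forces comparison against the \emph{weighted} principal eigenvalue $\lambda_{1}[m]$, $m=\min\{a,1\}$, rather than $\lambda_{1}$. Your Jensen step also presupposes convexity of $h$, which is neither checked nor obvious since $g''<0$.

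Second, your description of \eqref{ext} does not match what the paper proves. The characterization is not a boundedness-from-below threshold for an energy on $\{v\ge\underline v\}$; it is Il'yasov's extended-functional formula
\[
\lambda_\ast=\sup_{v\in S}\ \inf_{\psi\in\Phi}\ \frac{\displaystyle\int_\Omega\nabla v\cdot\nabla\psi-\int_\Omega a\,g(v)^{-\gamma}g'(v)\,\psi}{\displaystyle\int_\Omega g(v)^{p}g'(v)\,\psi},
\]
a sup--inf of a Rayleigh-type quotient over $S=\{v\in H_0^1\cap L^\infty:v\ge C(v)d(x)\}$ and nonnegative $\psi\in C_0^1(\overline\Omega)$. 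The identification $\lambda_\ast=\Lambda$ (Theorem~\ref{ast}) rests on the observation that any $v\in S$ with $\inf_{\psi}L(v,\psi)\ge\lambda$ is automatically a supersolution of $(Q_\lambda)$ (Lemma~\ref{L4}), which then feeds back into the sub--supersolution theorem with $\underline v=v_{0}$. Steps (i), (ii) and (v) of your outline are essentially the paper's; the substance lies in repairing (iii) and (iv) as above.
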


The proof of Theorem \ref{T1} is based on the method of sub-supersolutions. However, by virtue of the arbitrary growth of the  singular and superlinear terms  that appear in problem \eqref{pa}  we cannot use directly the method of sub-supersolutions here. An additional difficulty comes from the fact that these singular and superlinear terms are   nonhomogeneous. To overcome this difficulty we develop new arguments and a regularity result that allows us to obtain a subsolution to problem \eqref{pa} for all $\lambda>0$.  In particular, we  establish some preliminary results
 and we prove a sub-supersolution theorem (see Theorem \ref{TS}).

To prove the multiplicity of solutions for \eqref{pq}, with $\lambda \in (0,\lambda_{\ast})$, we need a refinement of hypotheses $(H)$.  We introduce the following assumption:
\begin{itemize}
	\item[$(H)_{\infty}$] There exists $\varphi \in C_{0}^{1}(\overline{\Omega})$  such that $\varphi>0$ on $\Omega$ and $a\varphi^{-1-\gamma} \in L^{\infty}(\Omega)$. 
\end{itemize}
If the function $\varphi$ satisfies $(H)_{\infty}$ then it satisfies $(H)$, too (see Section 4). 

  We denote by $2^{\ast}=2N/(N-2)$ the critical Sobolev exponent. Now we state our second result.

\begin{theorem}\label{T2}
	Under the assumptions $(H)_{\infty}$ and $3<p<22^{\ast}-1$, problem \eqref{pq} has at least two solutions  for $0<\lambda< \lambda_{\ast}$ and no solution for $\lambda > \lambda_{\ast}$. 
\end{theorem}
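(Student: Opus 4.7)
The plan is to reduce to a Sobolev-subcritical semilinear problem via the Colin--Jeanjean transformation and then produce a second solution by a mountain pass argument anchored at the first solution $v_\lambda$ given by Theorem \ref{T1}. Let $f:\mathbb{R}\to\mathbb{R}$ be the odd $C^\infty$ function determined by $f(0)=0$ and $f'(t)=(1+2f(t)^2)^{-1/2}$; the change of variable $u=f(w)$ recasts \eqref{pq} as the semilinear equation
\[
-\Delta w = a(x) f(w)^{-\gamma} f'(w) + \lambda f(w)^p f'(w) \text{ in } \Omega, \qquad w|_{\partial\Omega}=0,
\]
set naturally in $H_0^1(\Omega)$. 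Since $f(t)\sim t$ at $0$ and $f(t)\sim 2^{1/4}\sqrt{t}$ at infinity, the perturbation $f(w)^p f'(w)$ has growth $w^{(p-1)/2}$ with primitive of order $w^{(p+1)/2}$; hence the range $3<p<2\cdot 2^\ast-1$ provides exactly the two ingredients of a mountain pass---Sobolev-subcritical growth, because $(p+1)/2<2^\ast$, and an Ambrosetti--Rabinowitz condition with exponent $\theta=(p+1)/2>2$.

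Write $\underline w:=f^{-1}(v_\lambda)\in H_0^1(\Omega)\cap C_0^1(\overline\Omega)$, so $\underline w$ solves the transformed equation. To excise the singularity from the variational framework, I would truncate below $\underline w$: for $\widetilde s(x):=\max\{s,\underline w(x)\}$ set
\[
\widetilde g_\lambda(x,s)=a(x) f(\widetilde s)^{-\gamma} f'(\widetilde s) + \lambda f(\widetilde s)^p f'(\widetilde s), \qquad \widetilde G_\lambda(x,s)=\int_0^s \widetilde g_\lambda(x,t)\,dt,
\]
and consider the functional $\widetilde J_\lambda(w)=\tfrac12\int_\Omega|\nabla w|^2-\int_\Omega \widetilde G_\lambda(x,w)$. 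Hypothesis $(H)_\infty$ combined with Hopf's lemma applied to $v_\lambda$ yields $\underline w(x)\geq c\,\mathrm{dist}(x,\partial\Omega)$, which forces $a(x) f(\underline w)^{-\gamma} f'(\underline w)\in L^\infty(\Omega)$; this bound is what makes $\widetilde J_\lambda$ actually $C^1$-smooth on $H_0^1(\Omega)$. A standard weak-comparison argument---testing the transformed equation with $(\underline w-w)^+$---then shows that every critical point of $\widetilde J_\lambda$ dominates $\underline w$ pointwise, hence solves the un-truncated transformed equation.

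The variational core consists of three standard steps adapted to the present singular setting. First, I would show that $\underline w$ is a strict local minimum of $\widetilde J_\lambda$ in $H_0^1(\Omega)$: since $\lambda<\lambda_\ast$ there is room to construct a supersolution $\overline w>\underline w$ of the transformed problem, minimizing $\widetilde J_\lambda$ over the order interval $[\underline w,\overline w]$ yields a $C^1(\overline\Omega)$-local minimum at $\underline w$, and a Brezis--Nirenberg type argument then promotes this to an $H_0^1$-local minimum. Second, the A--R condition with $\theta>2$ makes $\widetilde J_\lambda(\underline w+t\varphi_1)\to-\infty$ as $t\to+\infty$, closing the mountain pass geometry. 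Third, subcriticality plus A--R deliver the Cerami condition in the usual way. The mountain pass theorem then produces a critical point $w^\ast$ of $\widetilde J_\lambda$ with $\widetilde J_\lambda(w^\ast)>\widetilde J_\lambda(\underline w)$, hence $w^\ast\neq\underline w$; back-transforming, $u^\ast=f(w^\ast)$ is the desired second solution of \eqref{pq}. Nonexistence for $\lambda>\lambda_\ast$ is already part of Theorem \ref{T1}.

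The step I expect to be the main obstacle is the promotion of $\underline w$ from a $C^1$-local minimum to an $H_0^1$-local minimum of $\widetilde J_\lambda$: the Brezis--Nirenberg argument must now absorb the nonhomogeneous singular term $a(x) f(w)^{-\gamma} f'(w)$, and this is precisely the place where the stronger hypothesis $(H)_\infty$---through the $L^\infty$-bound on $a\varphi^{-1-\gamma}$---enters, providing the uniform boundary control of the singular nonlinearity that the weaker $(H)$ of Theorem \ref{T1} does not supply.
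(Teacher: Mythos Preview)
Your proposal is essentially the paper's own argument: the Colin--Jeanjean change of variables, the first solution $v_\lambda$ as a local minimum of a $C^1$ functional (first in $C^1_0(\overline\Omega)$, then in $H^1_0(\Omega)$ via Brezis--Nirenberg), and a mountain pass for the second solution, with $(H)_\infty$ supplying the $L^\infty$ control on $a\,\varphi^{-1-\gamma}$ that makes the local-minimum step go through. The only cosmetic difference is that the paper translates by $v_\lambda$ and zero-extends the nonlinearity for negative arguments (its functional $\mathcal J_\lambda$), whereas you truncate from below at $\underline w$; these are interchangeable devices, and the paper's Palais--Smale verification (its Lemma~\ref{sps}) matches your Ambrosetti--Rabinowitz remark with any $2<\theta<(p+1)/2$.

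One point worth sharpening: the sentence ``minimizing $\widetilde J_\lambda$ over $[\underline w,\overline w]$ yields a $C^1$-local minimum at $\underline w$'' is not quite how the argument runs. What is actually needed---and what the paper proves under $(H)_\infty$---is the \emph{strict} $C^1$-separation $\overline w-\underline w\in\mathrm{int}\,(C^1_0(\overline\Omega)_+)$ (and likewise from the subsolution used to build $\underline w$), obtained by applying the strong maximum principle to $-\Delta(\overline w-\underline w)+c(\overline w-\underline w)\geq 0$; the constant $c$ comes from a mean-value estimate on the singular term and is finite precisely because $a\,g^{-1-\gamma}(\underline w)\in L^\infty$, which is the content of $(H)_\infty$. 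This separation is what places a full $C^1_0$-ball around $\underline w$ inside the order interval and turns the global minimization (already built into Theorem~\ref{T1}) into a $C^1$-local minimum; the Brezis--Nirenberg step afterwards needs only the subcritical growth, not $(H)_\infty$. So the hypothesis enters one step earlier than your final paragraph suggests.
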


\begin{example}
	When $\Omega$ is the unit ball, the functions $a(x)=(1-\vert x\vert^{2})^{\sigma},\sigma \geq \gamma+1$ and $\varphi(x)=1-\vert x\vert^{2}$ satisfy assumption $(H)_{\infty}$.
\end{example}

Let us highlight that the hypotheses $(H)_{\infty}$ plays a crucial role in the proof of Theorem \ref{T2}. Indeed, it allows us to show that $v_{\lambda}$ is a local minimum of the functional $J_{\lambda}$ in the topology of $C_{0}^{1}(\overline{\Omega})$   and that the modified functional $\mathcal{J}_{\lambda}$ belongs to $C^{1}(H_{0}^{1}(\Omega),\mathbb{R})$ and satisfies the assumptions of Theorem 1 in Brezis-Nirenberg \cite{BN} (see \eqref{j} and \eqref{J} in Section 4 for definition of $J_{\lambda}$ and $\mathcal{J}_{\lambda}$, respectively ). In particular, we get that $v=0$ is a local minimum of the functional $\mathcal{J}_{\lambda}$ in the $H_{0}^{1}(\Omega)$ topology. Then, after fine arguments we apply the Mountain Pass Theorem  to obtain a second solution of \eqref{pq}. It is worth pointing out that under the assumption $(H)$ we are not able to show Lemma \ref{minl}  and that $\mathcal{J}_{\lambda}$ satisfies the assumptions of Theorem 1 in \cite{BN}.

We emphasize that Theorem \ref{T1} improve the works \cite{JDP,SFS} in the sense that we show the existence and nonexistence of solutions for \eqref{pq} without restriction on the parameter $\lambda$. They also did not prove a result of nonexistence of solutions. As far as we know, Theorem \ref{T2} is the first result of multiplicity of $H_{0}^{1}(\Omega)$-solutions for singular problems with strong singularity  $\gamma>1$ and without restriction on the parameter $\lambda$, that is, we do not assume $\lambda$ small enough. Notice that no restriction on the $\gamma>0$ is assumed.

There is a wide literature dealing with existence and multiplicity results for problems involving both the $p$-Laplacian operator and singular nonlinearities. The reader who wishes to broaden his/her knowledge on these topics is referred to \cite{SLS,H,HSS,PRR,PW,GST,GS}, and to the references therein.

The paper is structured as follows: In Section 2, we reformulate problem \eqref{pq} into a new one which finds its natural setting in the Sobolev space $H_{0}^{1}(\Omega)$ and we present some results that will be important for our work. In particular, we prove a nonexistence result and a sub-supersolution theorem. In Section 3, we prove Theorem \ref{T1} and Section 4 is devoted to prove Theorem \ref{T2}.  

\textbf{Notation.}  Throughout this paper, we make use of the following notations:
\begin{itemize}
		\item $L^{q}(\Omega)$, for $1\leq q \leq \infty$, denotes the Lebesgue space with usual norm denoted by $\Vert u\Vert_{q}$.
	\item $H_{0}^{1}(\Omega)$ denotes the Sobolev space endowed with inner product
	$$\left(u,v\right)=\displaystyle \int_{\Omega} \nabla u \nabla v,~\forall u,v \in H_{0}^{1}(\Omega). $$
	The norm associated with this inner product will be denoted by $\Vert~~ \Vert$. 
		\item $W_{0}^{2,q}(\Omega)$ denotes the Sobolev space with norm
		$$\Vert u \Vert=\left(\sum_{\vert \alpha\vert \leq 2}\Vert D^{\alpha }u\Vert_{q}^{q}\right)^{1/q}. $$
		 
		\item Let us consider the space $C^{1}_{0}(\overline{\Omega})=\left\{u\in C^{1}(\overline{\Omega}): u=0~\mbox{on}~ \partial \Omega \right\}$ equipped with the norm $\Vert u \Vert_{C^{1}}=\displaystyle\max_{x\in \Omega}\vert u(x)\vert+\displaystyle\max_{x\in \Omega}\vert \nabla u(x)\vert$. If on $C^{1}_{0}(\overline{\Omega})$ we consider the pointwise partial ordering (i.e., $u\leq v$ if and only if $u(x)\leq v(x)$ for all $x\in \overline{\Omega}$), which is induced by the positive cone
		$$C^{1}_{0}(\overline{\Omega})_{+}=\left\{u\in C^{1}_{0}(\overline{\Omega}): u\geq 0~\mbox{for all}~ x\in \Omega \right\},$$
		then this cone has a nonempty interior given by
		$$int (C^{1}_{0}(\overline{\Omega})_{+})=\left\{u\in C^{1}_{0}(\overline{\Omega}): u> 0~\mbox{for all}~ x\in \Omega~\mbox{and}~\frac{\partial u}{\partial \nu}(x) <0~\mbox{for all}~ x\in \partial\Omega\right\},$$
		where $\nu$ is the outward unit normal vector to $\partial \Omega$
		at the point $x\in \partial \Omega$.
		\item $B_{r}(v)$ denotes the ball centered at $v \in C_{0}^{1}(\overline{\Omega})$ with radius $r>0$ (with respect to the topology of $C_{0}^{1}(\overline{\Omega})$).
		\item The function $d(x)=d(x,\partial \Omega)$ denotes the distance from a point $x\in \overline{\Omega}$ to the boundary $\partial \Omega$, where $\overline{\Omega}=\Omega \cup \partial \Omega$ is the closure of $\Omega \subset \mathbb{R}^{N}$.
		\item We denote by $\phi_{1}$ the $L^{\infty}(\Omega)$-normalized (that is, $\Vert \phi_{1}\Vert_{\infty}=1$) positive eigenfunction for the smallest eigenvalue $\lambda_{1}>0$ of  $\left(-\Delta, H_{0}^{1}(\Omega)\right)$. 
		
		\item If $u$ is a measurable function, we denote  the positive  and negative parts by $u^{+}=\max\left\{u,0\right\}$ and $u^{-}=\max\left\{-u,0\right\}$, respectively. 
		\item If $A$ is a measurable set in $\mathbb{R}^{N}$, we denote by $\vert A \vert$ the Lebesgue measure of $A$. 
		\item $k,c,c_{1},c_{2},...$ and $C$ denote (possibly different from line to line) positive constants.
		\item The arrow $\rightharpoonup $(respectively, $\to $) denotes weak (respectively strong) convergence.
\end{itemize}

\section{Preliminaries}
In this section, we will establish some preliminaries which will be important for our work.  We reduce the study of the existence of positive solutions for \eqref{pq} to the existence of positive solutions of a singular elliptic problem. In particular, we will prove a nonexistence result and a sub-supersolution theorem.

 We denote by $\phi_{1}$ the $L^{\infty}(\Omega)$-normalized  positive eigenfunction for the smallest eigenvalue $\lambda_{1}>0$ of  $\left(-\Delta, H_{0}^{1}(\Omega)\right)$. We start by proving that $\phi_{1}$ satisfies the assumption $(H)$.
We consider the following assumption.
\begin{itemize}
	\item[$(H^{\prime})$]There is $q>N$ such that $a\phi_{1}^{-\gamma}\in L^{q}(\Omega)$.
\end{itemize}

\begin{lemma}\label{L0}
	Assumptions $(H)$ and $(H^{\prime})$ are equivalent.
\end{lemma}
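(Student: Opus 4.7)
The plan is to handle the two implications separately; the easy direction is $(H') \Rightarrow (H)$, and the content lies in the reverse direction via a pointwise comparison of $\phi_1$ with $\varphi$ up to the boundary.

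For $(H') \Rightarrow (H)$, I would simply take $\varphi = \phi_1$. Since $\partial\Omega$ is smooth, elliptic regularity ensures $\phi_1 \in C_0^1(\overline{\Omega})$, and by construction $\phi_1 > 0$ in $\Omega$; the integrability condition is exactly the hypothesis. No further work is required.

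For $(H) \Rightarrow (H')$, the strategy is to show that there exists $c > 0$ with $\phi_1 \geq c\,\varphi$ in $\Omega$, so that the decreasing map $t \mapsto t^{-\gamma}$ yields $\phi_1^{-\gamma} \leq c^{-\gamma} \varphi^{-\gamma}$ and consequently $a\phi_1^{-\gamma} \leq c^{-\gamma} a\varphi^{-\gamma} \in L^q(\Omega)$ with the same $q > N$. To obtain this comparison, I would bound both functions by the distance $d(x) = d(x,\partial\Omega)$. On the one hand, since $\varphi \in C_0^1(\overline{\Omega})$ vanishes on $\partial\Omega$, the mean value theorem applied between $x$ and its nearest boundary point gives $\varphi(x) \leq \|\nabla\varphi\|_\infty\, d(x)$ for every $x \in \overline{\Omega}$. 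On the other hand, because $-\Delta \phi_1 = \lambda_1 \phi_1 > 0$ in $\Omega$, $\phi_1 = 0$ on $\partial\Omega$ and $\partial\Omega$ is smooth, Hopf's boundary point lemma implies $\phi_1 \in int(C_0^1(\overline{\Omega})_+)$, and from the sign of the normal derivative one deduces the standard estimate $\phi_1(x) \geq c_1\, d(x)$ for some $c_1 > 0$ and all $x \in \overline{\Omega}$. Combining these two bounds yields $\phi_1 \geq (c_1/\|\nabla\varphi\|_\infty)\,\varphi$, which is precisely the comparison needed.

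The only real technical point, and the step I would flag as requiring the smoothness of $\partial\Omega$, is the Hopf-type lower bound $\phi_1 \gtrsim d(x)$; everything else is elementary. Once this bound and the mean value estimate for $\varphi$ are in place, the proof closes immediately.
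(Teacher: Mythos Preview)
Your argument is correct and matches the paper's approach: the easy direction takes $\varphi=\phi_1$, and the other direction rests on the comparison $\phi_1\ge c\,\varphi$, from which $a\phi_1^{-\gamma}\le c^{-\gamma}a\varphi^{-\gamma}\in L^q(\Omega)$. The paper obtains this comparison by invoking Proposition~1 of \cite{MP} (using that $\phi_1\in \mathrm{int}(C_0^1(\overline{\Omega})_+)$ and $\varphi\in C_0^1(\overline{\Omega})_+$), which is precisely the distance-function/Hopf argument you spell out directly.
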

\begin{proof}
Suppose that $(H)$ holds. One has $\phi_{1}\in int(C^{1}_{0}(\overline{\Omega})_{+})$ and $\varphi\in C^{1}_{0}(\overline{\Omega})_{+}$. Then,  from   Proposition 1 in \cite{MP} there exists $k>0$ such that $\phi_{1}\geq k\varphi$ in $\Omega$ and hence $ a\phi_{1}^{-\gamma}\leq k^{-\gamma}a\varphi^{-\gamma}\in L^{q}(\Omega)$, proving $(H^{'})$.

If $(H^{'})$ holds,  then the function $\varphi=\phi_{1}$ and $q$ satisfy $(H)$. This concludes the proof.
\end{proof}

\begin{remark}\label{rem1} \begin{itemize}
		\item[a)]The arguments in the proof of Lemma \ref{L0} can be used to prove that if $(H)$ holds, then any function $u\in int(C_{0}^{1}(\overline{\Omega})_{+})$ satisfies the assumption $(H)$, too. 
		\item[b)] If $\varphi$ satisfies the assumption $(H)$ then $ a\varphi^{1-\gamma},a \in L^{q}(\Omega)$. Indeed, $a=a\varphi^{-\gamma}\varphi^{\gamma}\leq \Vert \varphi \Vert_{\infty}^{\gamma}a\varphi^{-\gamma}\in L^{q}(\Omega)$ and $a\varphi^{1-\gamma}\leq  \Vert \varphi \Vert_{\infty}a\varphi^{-\gamma}\in L^{q}(\Omega)$.
		\item[c)] It is well known that $\phi_{1}\in C^{1}(\overline{\Omega})$ and satisfies $cd(x)\leq \phi_{1}(x)\leq Cd(x)$, $x\in \Omega$, for some constants $c, C>0$ (see \cite{V}). 
	\end{itemize}
	
\end{remark}

Now, we observe that the natural energy functional corresponding to the problem \eqref{pq} is the following: 
$$Q(u)=\frac{1}{2}\displaystyle\int_{\Omega} (1+2u^{2})|\nabla u|^{2}+\frac{1}{\gamma-1}\int_{\Omega} a(x)F(|u|)-\dfrac{\lambda}{p+1}\int_{\Omega} \vert u\vert^{p+1},~u\in A(Q),$$
where 
$$A(Q)=\left\{u\in H_{0}^{1}(\Omega): \displaystyle\int_{\Omega} a(x)F(|u|)<\infty~\mbox{and}~ \int_{\Omega} \vert u\vert^{p+1}<\infty\right\}$$
and the function $F:[0,\infty)\to [0,\infty]$ satisfies $F^{\prime}(s)= s^{-\gamma}$ for $s>0$ (see \cite{R} for a complete definition of $F$ ).

However, this functional is not well defined, because $\displaystyle\int_{\Omega} u^{2}|\nabla u|^{2}dx$ is not finite for all $u\in H^{1}_{0}(\Omega) $, hence it is difficult to apply variational methods directly. In order to overcome this difficulty, we use the following change of variables introduced by \cite{CJ}, namely, $v:=g^{-1}(u),$ where $g$ is defined by

\begin{equation}\label{g}
	\left\{
	\begin{array}{l}
		g^{\prime}(t)=\frac{1}{\left(1+2|g(t)|^{2}\right)^{\frac{1}{2}}}~~\mbox{in}~~ [0,\infty),\\
		g(t)=-g(-t)~~\mbox{in}~~ (-\infty,0].
	\end{array}
	\right.
\end{equation}

We list some properties of $g$, whose proofs can be found in \cite{L,AR,U,CJ}.
\begin{lemma}\label{L1} The function $g$ satisfies the following properties:
	\begin{itemize}
		\item[$(1)$] $g$ is uniquely defined, $C^{\infty}$ and invertible;
		\item[$(2)$] $g(0)=0$;
		\item[$(3)$] $0<g^{\prime}(t)\leq 1$ for all $t\in \mathbb{R}$;
		\item[$(4)$] $\frac{1}{2}g(t)\leq tg^{\prime}(t)\leq g(t)$ for all $t>0$;
		\item[$(5)$] $|g(t)|\leq |t|$ for all $t\in \mathbb{R}$;
		\item[$(6)$] $|g(t)|\leq 2^{1/4}|t|^{1/2}$ for all $t\in \mathbb{R}$;
		\item[$(7)$] $(g(t))^{2}-g(t)g^{\prime}(t)t\geq 0$ for all $t\in \mathbb{R}$;
		\item[$(8)$] There exists a positive constant $C$ such that $|g(t)|\geq C|t|$ for $|t|\leq 1$ and $|g(t)|\geq C|t|^{1/2}$ for $|t|\geq 1$;
		\item[$(9)$] $g^{\prime \prime}(t)<0$ when $t>0$ and $g^{\prime \prime}(t)>0$ when $t<0$;
		\item[$(10)$] the function $(g(t))^{1-\gamma}$ for $\gamma>1$ is decreasing for all $t>0$;
		\item[$(11)$] the function  $(g(t))^{-\gamma}$ is decreasing for all $t>0$;
		\item[$(12)$] $|g(t)g^{\prime}(t)|<1/ \sqrt[]{2}$ for all $t\in \mathbb{R}$;
		\item[$(13)$] $g^{2}(ts)\geq tg^{2}(s)$ for all $t\geq 1$ and $s\geq 0$.
	\end{itemize}
\end{lemma}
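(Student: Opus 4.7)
The plan is to derive all thirteen properties systematically from the defining ODE \eqref{g} and a single key identity, namely $(g'(t))^{2}(1+2g(t)^{2})=1$, obtained by squaring the equation $g'(t)=(1+2g(t)^{2})^{-1/2}$. Although the result is stated as a compilation of known facts and the paper cites \cite{L,AR,U,CJ} for the proofs, it is instructive to lay out the logical dependencies between the individual items.

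\textbf{Existence, smoothness, and monotonicity (1)--(3), (9).} First I would appeal to Picard--Lindelöf applied to the ODE on $[0,\infty)$: the map $s\mapsto (1+2s^{2})^{-1/2}$ is bounded and smooth, so there is a unique $C^{\infty}$ solution $g$ defined on all of $[0,\infty)$. The odd extension in \eqref{g} is $C^{\infty}$ across $0$ because the even derivatives vanish at $0$. Since $g'(t)=(1+2g(t)^{2})^{-1/2}\in(0,1]$, one gets (3), which in turn gives strict monotonicity and hence invertibility in (1). Differentiating the identity $(g')^{2}(1+2g^{2})=1$ yields the closed form $g''(t)=-2g(t)(g'(t))^{4}$, from which property (9) is immediate.

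\textbf{The central inequalities (4), (7), (12), (13).} The crucial observation is that $(g^{2})''(t)=2(g')^{2}+2gg''=\frac{2}{1+2g^{2}}-\frac{4g^{2}}{(1+2g^{2})^{2}}=\frac{2}{(1+2g^{2})^{2}}>0$, so $g^{2}$ is convex on $[0,\infty)$ and vanishes at $0$. For a convex function $f$ with $f(0)=0$ one has $tf'(t)\geq f(t)$ (support line at $t$ lies above $f(0)$), which applied to $f=g^{2}$ gives $2tg(t)g'(t)\geq g(t)^{2}$, i.e.\ $tg'(t)\geq \tfrac{1}{2}g(t)$, the left inequality of (4). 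Convexity of $g^{2}$ with $g^{2}(0)=0$ also yields $g^{2}(t)/t$ increasing, which is exactly property (13) in the form $g^{2}(ts)\geq t\,g^{2}(s)$ for $t\geq1$. The right inequality of (4) comes from concavity of $g$ (from (9)): the tangent at $t$ dominates $g$, so $g(0)\leq g(t)-tg'(t)$. Property (7) is then just the right inequality of (4) multiplied by $g(t)>0$, extended to $t<0$ by oddness. Property (12) follows from $|g(t)g'(t)|=|g(t)|/(1+2g(t)^{2})^{1/2}\leq 1/\sqrt{2}$ (maximized when $2g^{2}=1$).

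\textbf{Growth bounds (5), (6), (8) and monotonicity (10), (11).} Property (5) follows by integrating $g'\leq 1$ from $0$. For (6) and the lower bound (8), I would work out the asymptotics from the ODE: near $0$, $g'(0)=1$ gives $g(t)\sim t$, and for large $t$ the equation $gg'\sim 1/\sqrt{2}$ integrates to $g(t)^{2}\sim \sqrt{2}\,t$, i.e.\ $g(t)\sim 2^{1/4}t^{1/2}$. These two regimes, combined with monotonicity of $g(t)/t^{1/2}$ (again from convexity of $g^{2}$), give both (6) and (8). Finally, (10) and (11) are immediate from strict monotonicity and positivity of $g$ on $(0,\infty)$: for $\gamma>1$, $s\mapsto s^{1-\gamma}$ and $s\mapsto s^{-\gamma}$ are decreasing, and composing with the increasing function $g$ preserves that monotonicity.

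The main obstacle is the lower bound in (4) (and with it (7) and (13)): it is not obvious from direct ODE comparison and really requires spotting that $g^{2}$ — not $g$ itself — is the convex object. Once that computation is in place, every remaining item is a short calculation using $(g')^{2}(1+2g^{2})=1$ or the formula $g''=-2g(g')^{4}$.
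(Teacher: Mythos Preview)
Your proof is correct and self-contained. The paper, by contrast, does not prove this lemma at all: it simply states the thirteen properties and defers to \cite{L,AR,U,CJ} for the details. Your organizing principle---the convexity of $g^{2}$ together with the closed formula $g''=-2g(g')^{4}$---is a clean way to package items (4), (7), (8) and (13), which in the cited literature are usually obtained by separate ad hoc computations; identifying $g^{2}$ rather than $g$ as the convex object is exactly the right observation.

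Two small remarks. In (12) the quantity $|g|/(1+2g^{2})^{1/2}$ is strictly increasing in $|g|$ with supremum $1/\sqrt{2}$ attained only in the limit $|g|\to\infty$; the strict inequality is therefore correct, but not because of a critical point at $2g^{2}=1$. For (6) there is also a one-line shortcut that avoids the asymptotic argument: by (12) one has $(g^{2})'=2gg'<\sqrt{2}$, and integrating from $0$ gives $g^{2}(t)\leq\sqrt{2}\,|t|$ directly.
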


\begin{corollary}\label{C0} For each $s>0$ there exists a constant $K>0$ such that $\vert t^{\gamma}ln(g(t))\vert \leq K$ for all $0<t\leq s$.
\end{corollary}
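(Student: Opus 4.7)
The plan is to control $t^{\gamma}|\ln(g(t))|$ by splitting the interval $(0,s]$ into a neighborhood of $0$ (where $g(t)$ is small and $\ln(g(t))$ diverges) and a region bounded away from $0$ (where the function is continuous and positive, hence trivially bounded). The key tool is property $(8)$ of Lemma \ref{L1}, which pins $g(t)$ between two linear functions of $t$ near the origin.

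More precisely, I would first reduce to the case $0<t\le t_{0}$ where $t_{0}=\min\{1,s\}$, since on the complementary interval $[t_{0},s]$ (if nonempty) the function $t\mapsto t^{\gamma}\ln(g(t))$ is continuous and $g(t)\ge g(t_{0})>0$, giving at once a finite uniform bound. On $(0,t_{0}]$ I would combine property $(5)$, $g(t)\le t\le 1$, with property $(8)$, $g(t)\ge Ct$, to obtain
\[
\ln(Ct)\;\le\;\ln(g(t))\;\le\;\ln(t)\;\le\;0,
\]
so that $|\ln(g(t))|=-\ln(g(t))\le -\ln C - \ln t$. Multiplying by $t^{\gamma}$,
\[
t^{\gamma}|\ln(g(t))|\;\le\; |\ln C|\,t^{\gamma} + t^{\gamma}|\ln t|,
\]
and the right-hand side tends to $0$ as $t\to 0^{+}$ since $\gamma>0$; in particular it is bounded on $(0,t_{0}]$.

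Combining the two estimates yields the required constant $K=K(s)$. The main (mild) obstacle is simply checking that the lower estimate from $(8)$ is available throughout the relevant range; this is why I split off $t_{0}=\min\{1,s\}$, because property $(8)$ is only asserted for $|t|\le 1$. No deeper difficulty arises, as the decay $t^{\gamma}|\ln t|\to 0$ is standard.
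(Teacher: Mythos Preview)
Your proof is correct and follows essentially the same route as the paper: both arguments reduce the question to the behavior near $t=0$ via continuity, invoke property~$(8)$ of Lemma~\ref{L1} to compare $g(t)$ with a linear function, and then appeal to the elementary limit $t^{\gamma}\ln t\to 0$. The only cosmetic difference is that the paper converts $t^{\gamma}$ into $C^{-\gamma}g(t)^{\gamma}$ and uses $s^{\gamma}\ln s\to 0$ with $s=g(t)$, whereas you convert $|\ln g(t)|$ into $|\ln C|+|\ln t|$ and work directly in~$t$; these are two sides of the same estimate.
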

\begin{proof}
	Since  $ h(t)=t^{\gamma}ln(g(t)),t>0$, is a continuous function it is sufficient to show that $\displaystyle \lim_{t\to 0}t^{\gamma}ln(g(t))=0$. From Lemma \ref{L1} $(8)$ one has
	$$ \vert t^{\gamma}ln(g(t))\vert \leq \vert C^{-\gamma}g^{\gamma}(t)ln(g(t))\vert ,$$
	for all $0<t\leq 1$, which implies that $\displaystyle \lim_{t\to 0}t^{\gamma}ln(g(t))=0$, because $\displaystyle \lim_{t\to 0}t^{\gamma}ln(t)=0$ and $\displaystyle \lim_{t\to 0}g(t)=0$.
\end{proof}

After a change of variable  $v=g^{-1}(u),$ we define an associated problem
\begin{equation*}\label{pa}\tag{$Q_{\lambda}$}
	\left\{
	\begin{array}{l}
		-\Delta v=\left[ a(x) (g(v))^{-\gamma} + \lambda(g(v))^{p}\right]g^{\prime}(v) ~\mbox{in} ~ \Omega,\\
		v> 0~\mbox{in}~ \Omega,\\
		v(x)=0 ~\mbox{on}~ \partial \Omega.
	\end{array}
	\right.
\end{equation*}

We say that a function $v \in H_{0}^{1}(\Omega)\cap L^{\infty}(\Omega)$ is a weak solution (solution, for short)  of \eqref{pa} if $v>0$ a.e. in $\Omega$, and, for every $\psi \in H^{1}_{0}(\Omega)$,
$$a(x)(g(v))^{-\gamma}g^{\prime}(v)\psi,(g(v))^{p}g^{\prime}(v)\psi \in L^{1}(\Omega)$$
and
$$
\int_{\Omega} \nabla v \nabla \psi =\int_{\Omega}  a(x) (g(v))^{-\gamma}g^{\prime}(v)\psi +\lambda\int_{\Omega} (g(v))^{p}g^{\prime}(v)\psi.$$

It is easy to see that problem \eqref{pa} is equivalent to our problem \eqref{pq}, which takes $u = g(v)$ as its solutions. Thus, our goal is reduced to proving the existence, nonexistence and multiplicity of solutions for the family of problems \eqref{pa}.

In order to study problem \eqref{pa}, one introduces 
the assumption:
\begin{itemize}
	\item[$(H)_{d}$] There are $\varphi \in C_{0}^{1}(\overline{\Omega})$ and $q>N$ such that $\varphi>0$ on $\Omega$ and $ag^{-\gamma}(\varphi)g^{\prime}(\varphi) \in L^{q}(\Omega)$ . 
\end{itemize}
The following lemma show the relation between the assumptions $(H)$ and $(H)_{d}$. 
\begin{lemma}\label{L000}
Suppose that the function $\varphi $ satisfies $(H)$. Then  $\varphi $ satisfies $(H)_{d}$. Moreover, $ag^{1-\gamma}(\varphi) \in L^{q}(\Omega)$ if $\gamma \neq 1$ and $a(x)ln(g(\varphi)) \in L^{q}(\Omega)$ if $\gamma = 1$.	
\end{lemma}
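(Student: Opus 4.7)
The plan is to compare $g(\varphi)$ and $g'(\varphi)$ with $\varphi$ itself, using the properties of $g$ collected in Lemma \ref{L1}. The point is that $\varphi \in C_0^1(\overline{\Omega})$ is bounded, say $\|\varphi\|_\infty \le M$, so the behavior of $g$ relevant for us is only that on the compact interval $[0,M]$, where $g$ is smooth, positive away from zero, and satisfies $g'(0)=1$. Consequently the continuous function $t \mapsto g(t)/t$ extends continuously to $[0,M]$ with value $1$ at the origin, hence admits a positive lower bound $C_0>0$ on $(0,M]$. Combined with Lemma \ref{L1}(8) this yields the two--sided comparison $C_0\,\varphi(x)\le g(\varphi(x))\le \varphi(x)$ on $\Omega$ (the upper inequality being Lemma \ref{L1}(5)), while Lemma \ref{L1}(3) gives $0<g'(\varphi)\le 1$.

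With these pointwise estimates in hand, verifying $(H)_d$ is immediate: one estimates
\[
a(x)\,g^{-\gamma}(\varphi)\,g'(\varphi)\ \le\ C_0^{-\gamma}\,a(x)\,\varphi^{-\gamma},
\]
and the right--hand side lies in $L^q(\Omega)$ by $(H)$. For the $L^q$--integrability of $ag^{1-\gamma}(\varphi)$ when $\gamma\ne 1$, I would split into two cases. If $0<\gamma<1$, then $g^{1-\gamma}(\varphi)\le \varphi^{1-\gamma}\le M^{1-\gamma}$, and Remark \ref{rem1}(b) guarantees $a\in L^q(\Omega)$, hence $ag^{1-\gamma}(\varphi)\in L^q(\Omega)$. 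If $\gamma>1$, then $g^{1-\gamma}(\varphi)\le C_0^{1-\gamma}\varphi^{1-\gamma}$, so
\[
a\,g^{1-\gamma}(\varphi)\ \le\ C_0^{1-\gamma}\,a\,\varphi^{1-\gamma}\ \le\ C_0^{1-\gamma}M\,a\,\varphi^{-\gamma}\in L^q(\Omega),
\]
again by $(H)$.

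The case $\gamma=1$ needs Corollary \ref{C0}. Applied with $s=M$ and $\gamma=1$, it gives a constant $K>0$ with $|\varphi(x)\ln(g(\varphi(x)))|\le K$ on $\Omega$, equivalently $|\ln(g(\varphi))|\le K\varphi^{-1}$. Therefore
\[
|a(x)\,\ln(g(\varphi))|\ \le\ K\,a(x)\,\varphi^{-1},
\]
and the right--hand side is in $L^q(\Omega)$ by $(H)$ (with $\gamma=1$). The main (mild) obstacle is really only bookkeeping: one must be careful that the constant $C_0$ obtained by the continuity argument does not degenerate, but this follows from $g(0)=0$ together with $g'(0)=1$ read off from the ODE \eqref{g}. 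Everything else is a direct combination of the properties listed in Lemma \ref{L1}, Remark \ref{rem1}(b) and Corollary \ref{C0}.
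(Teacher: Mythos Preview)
Your proof is correct and follows essentially the same route as the paper: both arguments reduce everything to the pointwise comparison $g(\varphi)\ge c\,\varphi$ (together with $g'\le 1$ and Corollary~\ref{C0} for the logarithmic case) and then invoke $(H)$. The only cosmetic differences are that the paper obtains the lower bound via an $\epsilon$-scaling trick combined with the monotonicity properties $(9)$, $(11)$ and Lemma~\ref{L1}(8), whereas you get it more directly from the continuity of $t\mapsto g(t)/t$ on $[0,M]$, and the paper treats the cases $\gamma\lessgtr 1$ uniformly while you split them.
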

\begin{proof}
	Let $0<\epsilon<1$ such that $\epsilon \Vert \varphi \Vert_{\infty}< 1$ holds. By using $(3),(8),(9)$ and $(11)$ of Lemma \ref{L1} and Corollary \ref{C0} (if $\gamma=1$) we find 
	$$ag^{-\gamma}(\varphi)g^{\prime}(\varphi)\leq ag^{-\gamma}(\epsilon\varphi)g^{\prime}(\epsilon \varphi)\leq C^{-\gamma}\epsilon^{-\gamma}a\varphi^{-\gamma}\in L^{q}(\Omega),$$
 	$$ag^{1-\gamma}(\varphi)\leq g(\Vert \varphi \Vert_{\infty})ag^{-\gamma}(\epsilon\varphi)\leq g(\Vert \varphi \Vert_{\infty}) \epsilon^{-\gamma}C^{-\gamma}a\varphi^{-\gamma}  \in L^{q}(\Omega)$$
 	and 
 	$$\vert a(x)ln(g(\varphi))\vert =\vert a(x)\varphi^{-\gamma}\varphi^{\gamma}ln(g(\varphi))\vert \leq Ka(x)\varphi^{-\gamma}\in L^{q}(\Omega), $$
	namely, $ag^{-\gamma}(\varphi)g^{\prime}(\varphi) \in L^{q}(\Omega)$ and $ag^{1-\gamma}(\varphi) \in L^{q}(\Omega)$ and $a(x)ln(g(\varphi))\in L^{q}(\Omega)$ if $\gamma=1$.
\end{proof}

To prove the nonexistence of solutions for \eqref{pa} we define the function $m(x)=\min\left\{a(x),1\right\}\in L^{\infty}(\Omega)$ and we will denote by $\lambda_{1}[m]$ the principal eigenvalue of 
\begin{equation*}
\label{pq1}\tag{$A$}\left\{
\begin{array}{l}
-\Delta u =\lambda m(x)u~in ~ \Omega,\\
u(x)=0~~on~~\partial \Omega.
\end{array}
\right.
\end{equation*}

It is known that $\lambda_{1}[m]$ is simples, $\lambda_{1}[m]>0$, and the associated eigenfunction $\tilde{\phi}_{1}$ can be chosen such that $\tilde{\phi}_{1}>0$ in $\Omega$ (see \cite{GP}, Theorem 6.2.9).

Next, we prove the nonexistence  of positive solutions for \eqref{pa}.

\begin{lemma}\label{L2}
	There exists a constant $\lambda^{\ast}>0$ such that problem \eqref{pa} has no solution for all $\lambda\in  (\lambda^{\ast},\infty)$. 
\end{lemma}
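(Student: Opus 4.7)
\textbf{Proof plan for Lemma \ref{L2}.} The strategy is to test a putative solution against the positive eigenfunction $\tilde\phi_1$ of $(A)$, and to produce a contradiction with the eigenvalue identity once $\lambda$ is large. Specifically, if $v$ solves $(Q_\lambda)$, then choosing $\psi=\tilde\phi_1\in H_0^1(\Omega)$ in the weak formulation and using $-\Delta\tilde\phi_1=\lambda_1[m]m(x)\tilde\phi_1$ yields
$$
\lambda_1[m]\int_\Omega m(x)\,v\,\tilde\phi_1\,dx
=\int_\Omega\bigl[a(x)(g(v))^{-\gamma}+\lambda(g(v))^p\bigr]g'(v)\,\tilde\phi_1\,dx.
$$
The whole argument reduces to establishing a pointwise lower bound of the form $[a(x)(g(v))^{-\gamma}+\lambda(g(v))^p]g'(v)\ge \Lambda(\lambda)\,m(x)\,v$ with $\Lambda(\lambda)\to\infty$ as $\lambda\to\infty$, since then the above identity forces $\lambda_1[m]\ge \Lambda(\lambda)$, which is impossible for $\lambda$ large (note that $\int_\Omega m v \tilde\phi_1>0$ because $a>0$, hence $m>0$ a.e., $v>0$ a.e., and $\tilde\phi_1>0$).

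The pointwise bound is produced by a splitting argument based on Lemma \ref{L1}. Using parts (3),(5),(8) one gets $g'(v)\ge g'(1)$ for $v\le 1$ and $(g(v))^{-\gamma}\ge v^{-\gamma}$, so that on $\{v\le 1\}$
$$
a(x)(g(v))^{-\gamma}g'(v)\ \ge\ g'(1)\,m(x)\,v^{-\gamma}.
$$
For the superlinear part, Lemma \ref{L1}(6) gives $g^2(v)\le \sqrt 2\,v$, hence $g'(v)=(1+2g^2(v))^{-1/2}\ge c\,v^{-1/2}$ for $v\ge 1$; combining with $g(v)\ge C v^{1/2}$ on $\{v\ge 1\}$ produces
$$
(g(v))^p g'(v)\ \ge\ c_1\,v^{(p-1)/2}\ \ge\ c_1\,v\qquad(v\ge 1,\ p\ge 3),
$$
and Lemma \ref{L1}(8) on $\{v\le 1\}$ gives $(g(v))^p g'(v)\ge c_2 v^p$.

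With these ingredients fixed, I would set $\Lambda:=2\lambda_1[m]$ (this is the target lower bound to kill the eigenvalue identity) and define two thresholds:
$\delta_1:=(g'(1)/\Lambda)^{1/(\gamma+1)}$ (so that for $v\le\delta_1$ the singular estimate gives $g'(1)m(x)v^{-\gamma}\ge \Lambda m(x)v$, using $m\le 1$), and $\delta_2(\lambda):=(\Lambda/(\lambda c_2))^{1/(p-1)}$ (so that for $\delta_2\le v\le 1$ the superlinear estimate yields $\lambda c_2 v^p\ge \Lambda v\ge \Lambda m(x)v$). Choosing $\lambda^\ast$ so large that simultaneously $\delta_2(\lambda)\le \delta_1$ and $\lambda c_1\ge \Lambda$ for every $\lambda>\lambda^\ast$, the three ranges $\{v\le \delta_1\}$, $\{\delta_1<v\le 1\}$ (covered since $\delta_1\ge \delta_2$), and $\{v\ge 1\}$ together give
$$
[a(x)(g(v))^{-\gamma}+\lambda(g(v))^p]g'(v)\ \ge\ 2\lambda_1[m]\,m(x)\,v\qquad\text{for all }v>0,\ x\in\Omega.
$$
Plugging into the identity above produces $\lambda_1[m]\int m v\tilde\phi_1\ge 2\lambda_1[m]\int m v\tilde\phi_1$, the desired contradiction.

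The only delicate point, and the one I expect to be the main obstacle, is that the bound $(g(v))^p g'(v)\ge c_1 v$ on $\{v\ge 1\}$ must hold for every $p\ge 3$; the computation above shows $p=3$ is exactly the threshold at which $(p-1)/2\ge 1$, so this is where the hypothesis $p\ge 3$ is essentially used. Everything else is bookkeeping: verifying that $\tilde\phi_1\in H_0^1(\Omega)$ is admissible as test function (immediate from the definition of solution) and that the three cases cover all $v>0$ once $\lambda>\lambda^\ast$.
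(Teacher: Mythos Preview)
Your proposal is correct and follows essentially the same approach as the paper: test against the principal eigenfunction $\tilde\phi_1$ of $(A)$ and derive a contradiction from a pointwise lower bound of the form $[a(x)g^{-\gamma}(v)+\lambda g^{p}(v)]g'(v)\ge C(\lambda)\,m(x)\,v$ with $C(\lambda)\to\infty$. The only organizational difference is that the paper obtains the pointwise bound via Lemma~\ref{L1}(4) (writing $j_\lambda(t)\ge g^{1-\gamma}(t)/(2t^{2})+\lambda g^{p+1}(t)/(2t^{2})$ and explicitly minimizing, which forces a case split on $\gamma\gtrless 1$), whereas your three-region splitting $\{v\le\delta_1\}$, $\{\delta_1<v\le 1\}$, $\{v\ge 1\}$ handles all $\gamma>0$ uniformly; both routes identify $p\ge 3$ as the threshold making the estimate on $\{v\ge 1\}$ work.
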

\begin{proof}
	Let us start by defining the function  $j_{\lambda}(t)=(g^{-\gamma}(t)g^{\prime}(t)+\lambda g^{p}(t)g^{\prime}(t))/t$ for $t>0$. Using $(4)$ of Lemma \ref{L1} we have that
	$$j_{\lambda}(t)\geq \frac{g^{1-\gamma}(t)}{2t^{2}}+\lambda \frac{g^{p+1}(t)}{2t^{2}},t>0.$$
We now distinguish two cases:\\
{\it Case $\gamma>1$}. From $(5)$ and $(8)$ of Lemma \ref{L1} we get
\begin{equation}\label{1}
j_{\lambda}(t)\geq\left\{
\begin{array}{l}
 \dfrac{t^{-1-\gamma}}{2} + \lambda\dfrac{C^{p+1}t^{p-1}}{2} ~\mbox{if} ~ 0<t\leq 1,\\
\dfrac{t^{-1-\gamma}}{2} + \lambda\dfrac{C^{p+1}t^{(p-3)/2}}{2} ~\mbox{if} ~ t\geq 1.
\end{array}
\right.
\end{equation}

In order to find a lower bound for the function $j_{\lambda}$ we observe that the function $$\tilde{f}(t)=  \dfrac{t^{-1-\gamma}}{2} + \lambda\dfrac{C^{p+1}t^{p-1}}{2},t>0,$$

has a global minimizer 
$$t_{\lambda}=\left[\dfrac{(1+\gamma)}{\lambda(p-1)C^{p+1}}\right]^{\dfrac{1}{p+\gamma}},$$
such that $t_{\lambda}<1$ for $\lambda$ large enough and 
\begin{equation}\label{3}
\displaystyle \min_{t>0}\tilde{f}(t)=\tilde{f}(t_{\lambda})=\dfrac{1}{2}\left[\dfrac{\lambda(p-1)C^{p+1}}{1+\gamma}\right]^{\dfrac{1+\gamma}{p+\gamma}}\left(\dfrac{p+\gamma}{p-1}\right).
\end{equation}
Hereafter, we fix $\lambda$ such that $t_{\lambda}<1$. Then, by using \eqref{1} and \eqref{3}, we infer that
$$\displaystyle \min_{t>0}j_{\lambda}(t)\geq \min \left\{\dfrac{1}{2}\left[\dfrac{\lambda(p-1)C^{p+1}}{1+\gamma}\right]^{\dfrac{1+\gamma}{p+\gamma}}\left(\dfrac{p+\gamma}{p-1}\right),\lambda\dfrac{C^{p+1}}{2}\right\}$$
and as a consequence there exists $\lambda^{\ast}$ such that 
\begin{equation}\label{4}
j_{\lambda^{\ast}}(t_{\lambda^{\ast}}):=\displaystyle \min_{t>0}j_{\lambda^{\ast}}(t)\geq \lambda_{1}[m].
\end{equation}

{\it Case $\gamma\leq 1$}. From  $(8)$ of Lemma \ref{L1} we get
\begin{equation}\label{2}
j_{\lambda}(t)\geq\left\{
\begin{array}{l}
\dfrac{C^{1-\gamma}t^{-1-\gamma}}{2} + \lambda\dfrac{C^{p+1}t^{p-1}}{2} ~\mbox{if} ~ 0<t\leq 1,\\
\dfrac{C^{1-\gamma}t^{(-3-\gamma)/2}}{2} + \lambda\dfrac{C^{p+1}t^{(p-3)/2}}{2} ~\mbox{if} ~ t\geq 1.
\end{array}
\right.
\end{equation}

In order to find a lower bound for the function $j_{\lambda}$ we observe that the function $$\tilde{h}(t)=  \dfrac{C^{1-\gamma}t^{-1-\gamma}}{2} + \lambda\dfrac{C^{p+1}t^{p-1}}{2},t>0,$$

has a global minimizer 
$$t_{\lambda}=\left[\dfrac{(1+\gamma)}{\lambda(p-1)C^{p+\gamma}}\right]^{\dfrac{1}{p+\gamma}},$$
such that $t_{\lambda}<1$ for $\lambda$ large enough and 
\begin{equation}\label{5}
\displaystyle \min_{t>0}\tilde{h}(t)=\tilde{h}(t_{\lambda})=\dfrac{C^{2}}{2}\left[\dfrac{\lambda(p-1)}{1+\gamma}\right]^{\dfrac{1+\gamma}{p+\gamma}}\left[\dfrac{p+\gamma}{p-1}\right].
\end{equation}
Hereafter, we fix $\lambda$ such that $t_{\lambda}<1$. Then, by using \eqref{2} and \eqref{5}, we infer that
$$\displaystyle \min_{t>0}j_{\lambda}(t)\geq \min \left\{\dfrac{C^{2}}{2}\left[\dfrac{\lambda(p-1)}{1+\gamma}\right]^{\dfrac{1+\gamma}{p+\gamma}}\left(\dfrac{p+\gamma}{p-1}\right),\lambda\dfrac{C^{p+1}}{2}\right\}$$
and as a consequence there exists $\lambda^{\ast}$ such that 
\begin{equation}\label{6}
j_{\lambda^{\ast}}(t_{\lambda^{\ast}}):=\displaystyle \min_{t>0}j_{\lambda^{\ast}}(t)\geq \lambda_{1}[m].
\end{equation}

Now, arguing by contradiction, we  suppose that for some $\lambda>\lambda^{\ast}$ problem \eqref{pa} has a solution $v_{\lambda}$, where $\lambda^{\ast}$ is defined in \eqref{4} (if $\gamma> 1$) and \eqref{6}(if $\gamma \leq 1$). By taking $\tilde{\phi}_{1}$ as a test function in the equation satisfied by $v_{\lambda}$ and $v_{\lambda}$ in the equation satisfied by $\tilde{\phi}_{1}$ we obtain
 \begin{align*}
 \displaystyle \int(a(x)g^{-\gamma}(v_{\lambda})+\lambda^{\ast}g^{p}(v_{\lambda}))g^{\prime}(v_{\lambda})\tilde{\phi}_{1}&\geq \displaystyle \int m(x)(g^{-\gamma}(v_{\lambda})+\lambda^{\ast}g^{p}(v_{\lambda}))g^{\prime}(v_{\lambda})\tilde{\phi}_{1}\\
 \geq &\int m(x)j_{\lambda^{\ast}}(t_{\lambda^{\ast}})v_{\lambda}\tilde{\phi}_{1}\\
 \geq&\int \lambda_{1}[m]m(x)v_{\lambda}\tilde{\phi}_{1}\\
 =&\int \nabla \tilde{\phi}_{1}\nabla v_{\lambda}\\
 =&\displaystyle \int(a(x)g^{-\gamma}(v_{\lambda})+\lambda g^{p}(v_{\lambda}))g^{\prime}(v_{\lambda})\tilde{\phi}_{1}
 \end{align*}
 and hence $\lambda^{\ast}\geq \lambda$, which is impossible by the choice of $\lambda$. By virtue of the relation between \eqref{pq} and \eqref{pa} we deduce that problem \eqref{pq} has no solution for $\lambda >\lambda^{\ast}$. 
 \end{proof}

Now, we define the notions of subsolution and supersolution and prove a sub-supersolution theorem.
\begin{definition}
	We say that $v$ is a subsolution of problem \eqref{pa} if $v\in H_{0}^{1}(\Omega)\cap L^{\infty}(\Omega)$, $v>0$ in $\Omega$, $a(x) (g(v))^{-\gamma}g^{\prime}(v)\psi, (g(v))^{p}g^{\prime}(v)\psi \in L^{1}(\Omega)$  and 
	$$
	\int_{\Omega} \nabla v \nabla \psi \leq\int_{\Omega}  a(x) (g(v))^{-\gamma}g^{\prime}(v)\psi +\lambda\int_{\Omega} (g(v))^{p}g^{\prime}(v)\psi,$$
	for all $\psi\in H_{0}^{1}(\Omega), \psi \geq 0$ in $\Omega$. Similarly, $v\in H_{0}^{1}(\Omega)\cap L^{\infty}(\Omega)$, $v>0$ in $\Omega$, is a supersolution of \eqref{pa} if $a(x) (g(v))^{-\gamma}g^{\prime}(v)\psi, (g(v))^{p}g^{\prime}(v)\psi \in L^{1}(\Omega)$  and 
	$$
	\int_{\Omega} \nabla v \nabla \psi \geq\int_{\Omega}  a(x) (g(v))^{-\gamma}g^{\prime}(v)\psi +\lambda\int_{\Omega} (g(v))^{p}g^{\prime}(v)\psi,$$
	for all $\psi\in H_{0}^{1}(\Omega), \psi \geq 0$ in $\Omega$.
\end{definition}

\begin{theorem}\label{TS}
	Let $\underline{v}$ and $\overline{v}$ be a  subsolution respectively a  supersolution of problem \eqref{pa} such that $\underline{v}\leq \overline{v}$ in $\Omega$. Then there exists a solution $v\in H_{0}^{1}(\Omega)\cap L^{\infty}(\Omega)$ of \eqref{pa} such that $\underline{v}\leq v\leq \overline{v}$ in $\Omega$.
\end{theorem}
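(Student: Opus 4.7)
The plan is to pass to an auxiliary problem whose nonlinearity is globally dominated by an $H^{-1}$-admissible function, solve that auxiliary problem by direct minimization in $H_0^1(\Omega)$, and then use $\underline{v}$, $\overline{v}$ as pointwise barriers via a standard comparison argument to confine the minimizer to the order interval $[\underline{v},\overline{v}]$; inside that interval the truncation is transparent, so the minimizer is a solution of \eqref{pa}.

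Writing the right-hand side of \eqref{pa} as $f_\lambda(x,t) := \bigl[a(x)(g(t))^{-\gamma} + \lambda(g(t))^{p}\bigr]\,g^{\prime}(t)$, I would introduce the pointwise truncation $T(w)(x) := \min\{\max\{w(x),\underline{v}(x)\},\overline{v}(x)\}$ and the cut-off nonlinearity $\hat{f}_\lambda(x,t) := f_\lambda(x,T(t)(x))$. The decisive observation is that parts (3), (9) and (11) of Lemma \ref{L1} imply that $t \mapsto (g(t))^{-\gamma} g^{\prime}(t)$ is decreasing on $(0,\infty)$, so on $[\underline{v}(x),\overline{v}(x)]$ the singular term is dominated by $a(x)(g(\underline{v}))^{-\gamma} g^{\prime}(\underline{v})$, while $(g(T(t)))^{p} g^{\prime}(T(t)) \leq \Vert\overline{v}\Vert_{\infty}^{p}$ follows from $g(s)\leq s$ and $g^{\prime}\leq 1$. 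Consequently
$$0 \leq \hat{f}_\lambda(x,t) \leq M(x) := a(x)(g(\underline{v}(x)))^{-\gamma} g^{\prime}(\underline{v}(x)) + \lambda\Vert\overline{v}\Vert_{\infty}^{p}$$
for a.e.\ $x\in\Omega$ and every $t\in\mathbb{R}$, and the definition of subsolution guarantees that $\psi \mapsto \int_\Omega M\psi$ is a well-defined, continuous linear form on $H_0^1(\Omega)$.

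Setting $\hat{F}(x,v) := \int_0^v \hat{f}_\lambda(x,s)\,ds$, I would then consider the energy
$$\Phi(v) := \tfrac{1}{2}\int_\Omega |\nabla v|^2 - \int_\Omega \hat{F}(x,v), \qquad v \in H_0^1(\Omega).$$
Since $\hat{f}_\lambda(x,\cdot)$ is continuous (the cut-off matches at the endpoints) and bounded by $M$, we have $|\hat{F}(x,v)|\leq M(x)|v|$, and the continuity of $\psi \mapsto \int_\Omega M\psi$ on $H_0^1(\Omega)$ combined with Lebesgue dominated convergence yields $\Phi \in C^{1}(H_0^{1}(\Omega),\mathbb{R})$, coercivity of $\Phi$, and sequential weak lower semicontinuity (the non-quadratic part is in fact weakly continuous, via the compact embedding $H_0^1 \hookrightarrow L^{r}$ for $r<2^{\ast}$). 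A global minimizer $v \in H_0^1(\Omega)$ therefore exists and solves $-\Delta v = \hat{f}_\lambda(x,v)$ weakly in $H_0^1(\Omega)$.

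Finally, using $\psi := (\underline{v}-v)^{+} \in H_0^1(\Omega)$, which is nonnegative and admissible in both the equation for $v$ and the subsolution inequality for $\underline{v}$, and observing that on $\{\underline{v}>v\}$ one has $T(v) = \underline{v}$ and hence $\hat{f}_\lambda(x,v) = f_\lambda(x,\underline{v})$, subtraction of the two integral identities produces $\int_\Omega |\nabla(\underline{v}-v)^{+}|^{2} \leq 0$, whence $v \geq \underline{v}$ a.e. The symmetric choice $\psi = (v-\overline{v})^{+}$ applied against the supersolution inequality for $\overline{v}$ yields $v \leq \overline{v}$ a.e. Therefore $0 < \underline{v} \leq v \leq \overline{v}$ a.e., $v \in L^{\infty}(\Omega)$, and $\hat{f}_\lambda(x,v) = f_\lambda(x,v)$, which closes the argument. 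The main obstacle I expect is the functional-analytic setup of $\Phi$ in the presence of the singular term: once one extracts from the subsolution hypothesis that $M$ induces a continuous linear form on $H_0^1(\Omega)$, the differentiability, coercivity, weak lower semicontinuity and the pointwise comparison are all routine.
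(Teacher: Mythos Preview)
Your approach is correct and genuinely different from the paper's. You truncate \emph{both} the singular and the superlinear parts of $f_\lambda$ via the pointwise projection onto $[\underline{v},\overline{v}]$, so that the truncated nonlinearity is dominated by the fixed function $M(x)=a(x)(g(\underline v))^{-\gamma}g'(\underline v)+\lambda\|\overline v\|_\infty^p$ and the associated energy $\Phi$ can be minimized on all of $H_0^1(\Omega)$ by the direct method. The paper instead truncates only the superlinear factor $g^p g'$, keeping the singular term $a(x)(g(v))^{-\gamma}g'(v)$ untouched; the resulting functional $I_\lambda$ is not $C^1$, is defined only on its effective domain $D$, and its minimization is handled by appealing to the results of \cite{R} (with the standing hypothesis $(H)$ used to guarantee $D\neq\emptyset$). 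Your route is thus more elementary and self-contained. The paper's partial truncation, however, is chosen with Section~4 in mind: because the singular parts of $I_\lambda$ and $J_\lambda$ coincide, the two functionals differ by an additive constant on $[\underline v,\overline v]$, and this identity is precisely what drives the proof of Lemma~\ref{minl}. One technical point in your outline deserves a line of justification: that $\psi\mapsto\int_\Omega M\psi$ is \emph{continuous} on $H_0^1(\Omega)$ does not follow immediately from the pointwise $L^1$ condition in the subsolution definition, but a short uniform-boundedness argument settles it (were it unbounded, summing $\sum 2^{-n}\psi_n$ with $\psi_n\geq 0$, $\|\psi_n\|=1$ and $\int_\Omega M\psi_n\geq 4^n$ would produce $\psi\in H_0^1(\Omega)$, $\psi\geq 0$, with $\int_\Omega M\psi=\infty$); once this is in place, your coercivity, weak lower semicontinuity and comparison steps are indeed routine, and the full $C^1$ regularity of $\Phi$---though slightly stronger than what you actually use---is not needed for the Euler--Lagrange identity at the minimizer.
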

\begin{proof}
	We define a truncated function $\tilde{g}:\Omega \times \mathbb{R}\to \mathbb{R}$ by letting,
	\begin{equation*}
	\tilde{g}(x,t)=\left\{
	\begin{array}{l}
	 g^{p}(\underline{v}(x))g^{\prime}(\underline{v}(x)) ~\mbox{if} ~ t\leq \underline{v}(x),\\
	g^{p}(t)g^{\prime}(t) ~\mbox{if} ~ \underline{v}(x)\leq t\leq \overline{v}(x),\\
	g^{p}(\overline{v}(x))g^{\prime}(\overline{v}(x)) ~\mbox{if} ~  \overline{v}(x)\leq t.
	\end{array}
	\right.
	\end{equation*}
	Clearly, $\tilde{g}$ is a Carathéodory function. Moreover, $(3)$ and $(5)$ of Lemma \ref{L1} imply that
	\begin{equation}\label{66}
	\vert \tilde{g}(x,t)\vert \leq \vert \overline{v}(x)\vert^{p} \leq \Vert \overline{v}\Vert^{p}_{\infty}:=c,
	\end{equation}
	for all $(x,t)\in \Omega \times \mathbb{R}$. We denote by $\tilde{G}(x,t)=\displaystyle \int_{0}^{t}\tilde{g}(x,s)ds$ the primitive of $\tilde{g}$.

Now, we consider the auxiliary  singular elliptic problem 
\begin{equation*}\label{pau}\tag{$A_{\lambda}$}
\left\{
\begin{array}{l}
-\Delta v=a(x) (g(v))^{-\gamma}g^{\prime}(v) + \lambda\tilde{g}(x,v) ~\mbox{in} ~ \Omega,\\
v> 0~\mbox{in}~ \Omega,\\
v(x)=0 ~\mbox{on}~ \partial \Omega.
\end{array}
\right.
\end{equation*}

We will show that problem \eqref{pau} has a solution $v$ such that $\underline{v}\leq v\leq \overline{v}$ in $\Omega$. Thus, from definition of $\tilde{g}$ we obtain that $v$ is a solution of \eqref{pa}.  
Define the function $G$ as it follows:
\newline
if $0<\gamma<1$, $G(t)= \frac{g^{1-\gamma}(\vert t\vert)}{1-\gamma}$ and $t\in \mathbb{R}$,\\
if $\gamma=1$, \begin{equation*}
	G(t)=\left\{
	\begin{array}{l}
		\ln g(t),~if ~ t> 0\\
		+\infty,~if~ t=0
	\end{array}
	\right.
\end{equation*}
if $\gamma>1$,
\begin{equation*}
	G(t)=\left\{
	\begin{array}{l}
		\frac{g^{1-\gamma}(t)}{1-\gamma},~if ~ t> 0\\
		+\infty,~if~ t=0.
	\end{array}
	\right.
\end{equation*}

We can associate to problem \eqref{pau} the following energy functional
\begin{equation}\label{fun}
	I_{\lambda}(v)=\frac{1}{2}\Vert v\Vert^{2}-\displaystyle \int_{\Omega}a(x) G(\vert v\vert)-\lambda \displaystyle \int_{\Omega}\tilde{G}(x,v),
\end{equation}
for every $v\in D$, where  
\begin{equation}\label{EF}
D=\left\{v\in H^{1}_{0}(\Omega): \displaystyle \int_{\Omega}a(x)G(\vert v\vert)\in \mathbb{R} \right\}
\end{equation}
is the effective domain of $I_{\lambda}$. As we known, the functional $I_{\lambda}$ fails to be G\^ateaux differentiable because of the singular term, then we can not apply the critical point theory for functionals of class $C^{1}$.

The assumption $(H)$ and Lemmas \ref{L0} and \ref{L000} imply that $aG(\phi_{1})\in L^{q}(\Omega)$. In particular, one has $\phi_{1}\in D$ and hence $D\neq \emptyset$. Then, using \eqref{66} and arguing as in the proof of Theorems 1.1 and 1.2 of  \cite{R} we can show that there exists a solution $v$ of \eqref{pau} and it satisfies 
$$I_{\lambda}(v)=\displaystyle \inf_{z\in D}I_{\lambda}(z).$$

It remains to check that $\underline{v}\leq v\leq \overline{v}$ in $\Omega$. We set $(v-\underline{v})^{-}=\max\left\{-(v-\underline{v}),0\right\}$. Using that $\underline{v}$ is a subsolution and $v$ is a solution, we have
	$$
	\int_{\Omega} \nabla \underline{v} \nabla (v-\underline{v})^{-} \leq\int_{\Omega}  a(x) (g(\underline{v}))^{-\gamma}g^{\prime}(\underline{v})(v-\underline{v})^{-} +\lambda\int_{\Omega} (g(\underline{v}))^{p}g^{\prime}(\underline{v})(v-\underline{v})^{-},$$
$$
\int_{\Omega} \nabla v \nabla (v-\underline{v})^{-} =\int_{\Omega}  a(x) (g(v))^{-\gamma}g^{\prime}(v)(v-\underline{v})^{-} +\lambda\int_{\Omega} \tilde{g}(x,v)(v-\underline{v})^{-},$$
and applying $(9),(10)$ and $(11)$ of Lemma \ref{L1}, we find
\begin{align*}
-\int_{\Omega} \vert \nabla (v-\underline{v})^{-}\vert^{2}&\geq\int_{\left\{v<\underline{v}\right\}}  a(x) ((g(v))^{-\gamma}g^{\prime}(v)-(g(\underline{v}))^{-\gamma}g^{\prime}(\underline{v}  ))(v-\underline{v})^{-}\\& +\lambda\int_{\left\{v<\underline{v}\right\}} (\tilde{g}(x,v)-(g(\underline{v}))^{p}g^{\prime}(\underline{v}))(v-\underline{v})^{-}\\
&\geq \lambda\int_{\left\{v<\underline{v}\right\}} (\tilde{g}(x,v)-(g(\underline{v}))^{p}g^{\prime}(\underline{v}))(v-\underline{v})^{-}\\
&= \lambda\int_{\left\{v<\underline{v}\right\}} ((g(\underline{v}))^{p}g^{\prime}(\underline{v})-(g(\underline{v}))^{p}g^{\prime}(\underline{v}))(v-\underline{v})^{-}\\
&=0,
\end{align*}
namely $\Vert (v-\underline{v})^{-}\Vert=0$, which means that $\underline{v}\leq v$ in $\Omega$.

Similarly, setting $(v-\overline{v})^{+}=\max\left\{v-\overline{v},0\right\}$ and using that $\overline{v}$ is a supersolution and $v$ is a solution, jointly with $(9),(10)$ and $(11)$ of Lemma \ref{L1}, we get
\begin{align*}
\int_{\Omega} \vert \nabla (v-\overline{v})^{+}\vert^{2}&\leq\int_{\left\{\overline{v}<v\right\}}  a(x) ((g(v))^{-\gamma}g^{\prime}(v)-(g(\overline{v}))^{-\gamma}g^{\prime}(\overline{v}  ))(v-\overline{v})^{+}\\& +\lambda\int_{\left\{\overline{v}<v\right\}} (\tilde{g}(x,v)-(g(\overline{v}))^{p}g^{\prime}(\overline{v}))(v-\overline{v})^{+}\\
&\leq \lambda\int_{\left\{\overline{v}<v\right\}} (\tilde{g}(x,v)-(g(\overline{v}))^{p}g^{\prime}(\overline{v}))(v-\overline{v})^{+}\\
&= \lambda\int_{\left\{\overline{v}<v\right\}} ((g(\overline{v}))^{p}g^{\prime}(\overline{v})-(g(\overline{v}))^{p}g^{\prime}(\overline{v}))(v-\overline{v})^{+}\\
&=0,
\end{align*}
namely $\Vert (v-\overline{v})^{+}\Vert=0$, which means that $v\leq \overline{v}$ in $\Omega$. This completes the proof of the theorem.

\end{proof}

\begin{remark}\label{rem2}
	\begin{itemize}
		\item[a)] Arguing as in the proof of  Lemmas \ref{L0} and \ref{L000} we can show that $int(C^{1}_{0}(\overline{\Omega})_{+}) \subset D$ $(see ~~\eqref{EF})$. Hence it makes sense to consider the local minimum obtained in Lemma \ref{minl}, because $v_{\lambda}\in int(C^{1}_{0}(\overline{\Omega})_{+}) \subset D$. 
		\item[b)] If $0<\gamma<1$ holds, then $I_{\lambda}(v)<0$. Indeed, applying Lemma \ref{L2} $(8)$ we obtain
		$$I_{\lambda}(v)\leq I_{\lambda}(t\phi_{1}) \leq \frac{t^{2}}{2}\Vert \phi_{1}\Vert^{2}-\dfrac{C^{1-\gamma}t^{1-\gamma}}{1-\gamma}\displaystyle \int_{\Omega}a(x)\phi_{1}^{1-\gamma}<0,$$
		provided $0<t<1$ is small enough.
	\end{itemize}
	
\end{remark}

The following lemma shows the existence of a subsolution of \eqref{pa} for all $\lambda>0$.
\begin{lemma}\label{L00}
	If $v_{0}\in H_{0}^{1}(\Omega)$ is the unique weak solution of $(Q_{0})$, then $v_{0}\in C^{1}_{0}(\overline{\Omega})$ and $v_{0}(x)\geq C d(x)$ in $\Omega$ for some constant $C>0$. Moreover, $a(x) (g(v_{0}))^{-\gamma}g^{\prime}(v_{0})\in L^{q}(\Omega)$ and $v_{0}$ is a subsolution of \eqref{pa} for all $\lambda>0$.
\end{lemma}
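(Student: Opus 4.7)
The plan is to first establish the pointwise lower bound $v_0(x) \geq Cd(x)$, then bootstrap to the integrability of the right-hand side and to $C_0^1$ regularity. The final assertion, that $v_0$ is a subsolution of $(Q_\lambda)$ for every $\lambda > 0$, is essentially free: since $v_0$ solves $(Q_0)$ and the extra term $\lambda(g(v_0))^{p}g'(v_0)$ is non-negative, the equality in $(Q_0)$ immediately becomes the subsolution inequality for $(Q_\lambda)$.

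For the lower bound I would build a subsolution of $(Q_0)$ of the form $\underline{v} := \epsilon w$, where $w \in H_0^1(\Omega)$ is the unique solution of $-\Delta w = a(x)$ in $\Omega$ with $w=0$ on $\partial\Omega$. By Remark \ref{rem1}(b), $a \in L^q(\Omega)$ with $q > N$, so $w\in W^{2,q}(\Omega) \cap C^{1,\alpha}(\overline\Omega)$ with $w \geq c_1 d$ in $\Omega$ by the strong maximum principle and Hopf's lemma. Inserting $\underline{v}$ into the subsolution inequality for $(Q_0)$ reduces to the pointwise estimate $\epsilon \leq (g(\epsilon w))^{-\gamma}g'(\epsilon w)$; since the map $t \mapsto (g(t))^{-\gamma}g'(t)$ is non-increasing on $(0,\infty)$ by $(9)$ and $(11)$ of Lemma \ref{L1} and blows up as $t \to 0^+$, this estimate holds for all $\epsilon > 0$ small enough, and for such $\epsilon$ the $L^1$-integrability conditions in the definition of subsolution follow from monotonicity and the calculation in the proof of Lemma \ref{L000}, since $\underline{v} \geq C\phi_1$ by Remark \ref{rem1}(c). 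To compare $\underline{v}$ with $v_0$, I would exploit that the nonlinearity $f(x,t) := a(x)(g(t))^{-\gamma}g'(t)$ is non-increasing in $t$: testing the subsolution inequality for $\underline{v}$ and the equation for $v_0$ against $(\underline{v} - v_0)^+ \in H^1_0(\Omega)$ and subtracting gives
\begin{equation*}
\|(\underline{v} - v_0)^+\|^2 \leq \int_{\{\underline{v} > v_0\}} \bigl(f(x,\underline{v}) - f(x,v_0)\bigr)(\underline{v} - v_0)^+ \leq 0,
\end{equation*}
so $v_0 \geq \underline{v} \geq Cd$ in $\Omega$.

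With $v_0 \geq Cd$ in hand, Remark \ref{rem1}(c) yields $v_0 \geq C'\phi_1$; the monotonicity of $(g(\cdot))^{-\gamma}g'(\cdot)$ together with the bound from the proof of Lemma \ref{L000} gives $a(x)(g(v_0))^{-\gamma}g'(v_0) \leq K a \phi_1^{-\gamma} \in L^q(\Omega)$, so $-\Delta v_0 \in L^q(\Omega)$ with $q > N$. Standard elliptic regularity then yields $v_0 \in W^{2,q}(\Omega) \hookrightarrow C^{1,\alpha}(\overline\Omega) \subset C_0^1(\overline\Omega)$. The main obstacle in this plan is the comparison step: one must verify that $(\underline{v} - v_0)^+$ is admissible in both weak formulations, which requires the $L^1$-integrability of the singular terms against arbitrary $H^1_0$ test functions, and then use the monotonicity of $f$ in $t$ to make the singular right-hand side cancel with the correct sign. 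All remaining parts are routine consequences of Lemma \ref{L1}, Lemma \ref{L000}, and standard elliptic theory.
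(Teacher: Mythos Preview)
Your proposal is correct, and from the lower bound onward it coincides with the paper's argument: once $v_0\geq C'\phi_1$, monotonicity of $t\mapsto (g(t))^{-\gamma}g'(t)$ and Lemma~\ref{L1}(8) give $a(g(v_0))^{-\gamma}g'(v_0)\in L^q(\Omega)$, whence $v_0\in W^{2,q}\hookrightarrow C_0^1(\overline{\Omega})$, and the subsolution claim is free since the added term $\lambda (g(v_0))^p g'(v_0)$ is non-negative.

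The difference is in how the bound $v_0\geq Cd$ is obtained. The paper gets it in one line by invoking Theorem~3 of Brezis--Nirenberg \cite{BN}: since $-\Delta v_0=a(g(v_0))^{-\gamma}g'(v_0)\geq 0$ in the weak sense and $v_0\geq 0$, $v_0\not\equiv 0$, that strong-maximum-principle result immediately yields $v_0\geq c_2 d$. You instead build an explicit barrier $\underline v=\epsilon w$ with $-\Delta w=a$ and run a comparison argument exploiting that $t\mapsto (g(t))^{-\gamma}g'(t)$ is non-increasing. Your approach is more self-contained and exposes the mechanism clearly (it does not rely on the quoted BN result), at the cost of a longer argument; the paper's approach is shorter but outsources the key step. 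Both are valid, and the integrability concern you flag for the test function $(\underline v-v_0)^+$ is indeed handled by the very definition of weak solution for $v_0$ together with the $L^q$-bound you derive for the singular term evaluated at $\underline v$.
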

\begin{proof}
	From Lemma \ref{L0} and Remark \ref{rem1} $b)$ one has $a(x)\phi_{1}^{1-\gamma}\in L^{q}(\Omega),q>1$ and hence, the existence of a unique weak solution $v_{0}\in H_{0}^{1}(\Omega)$  of $(Q_{0})$ follows from Theorem 1.3 in  \cite{AR}. Now we want to show that $v_{0}\in C_{0}^{1}(\overline{\Omega})$. Using Theorem 3 of Brezis-Nirenberg \cite{BN} there exist constants $c_{1},c_{2}>0$ such that $v_{0}(x) \geq c_{2}d(x)\geq c_{1}\phi_{1}(x)$ in $\Omega$ and $c_{1}\phi_{1}(x)<1$ in $\Omega$. By Lemma \ref{L1} $(3),(8),(11)$ and Lemma \ref{L0},
	$$ a(x) (g(v_{0}))^{-\gamma}g^{\prime}(v_{0})\leq C^{-\gamma}c_{1}^{-\gamma}a(x)\phi_{1}^{-\gamma}\in L^{q}(\Omega),$$
	that is, $a(x) (g(v_{0}))^{-\gamma}g^{\prime}(v_{0})\in L^{q}(\Omega)$ with $q>N$. Thus, by elliptic regularity, $v_{0}\in W_{0}^{2,q}(\Omega)$, and then by the Sobolev embedding theorem we have $v_{0}\in C^{1}_{0}(\overline{\Omega})$. Finally, from the fact that $v_{0}$ is a solution of $(Q_{0})$ and $v_{0}\in C_{0}^{1}(\overline{\Omega})$ one deduces that $v_{0}$ is a subsolution of \eqref{pa} for all $\lambda>0$. This completes the proof.
\end{proof}

We end this section with the following lemma.
\begin{lemma}\label{L4}
Let $v\in H_{0}^{1}(\Omega),v>0$ in $\Omega$, and 	suppose that
	$$
	\int_{\Omega} \nabla v \nabla \psi \geq\int_{\Omega}  a(x) (g(v))^{-\gamma}g^{\prime}(v)\psi +\lambda\int_{\Omega} (g(v))^{p}g^{\prime}(v)\psi,$$
	for all $\psi\in C_{0}^{1}(\overline{\Omega}), \psi \geq 0$, holds. Then 
	$$
	\int_{\Omega} \nabla v \nabla \psi \geq\int_{\Omega}  a(x) (g(v))^{-\gamma}g^{\prime}(v)\psi +\lambda\int_{\Omega} (g(v))^{p}g^{\prime}(v)\psi,$$
	for all $\psi\in H_{0}^{1}(\Omega), \psi \geq 0$ in $\Omega$, holds. In particular, $v\geq v_{0}$ in $\Omega$, where $v_{0}$ is the unique solution of $(Q_{0})$.
\end{lemma}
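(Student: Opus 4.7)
The plan has two stages. First, I extend the admissible class of test functions from $C_{0}^{1}(\overline{\Omega})_{+}$ to $H_{0}^{1}(\Omega)\cap\{\psi\geq 0\}$ by a density argument combined with Fatou's lemma. Second, I apply the enlarged inequality with $\psi=(v_{0}-v)^{+}$, compare against the equation satisfied by $v_{0}$, and exploit the monotonicity of $t\mapsto (g(t))^{-\gamma}g'(t)$ to force $(v_{0}-v)^{+}\equiv 0$.

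For the extension, fix $\psi\in H_{0}^{1}(\Omega)$ with $\psi\geq 0$. By a standard approximation procedure---take $\varphi_{n}\in C_{c}^{\infty}(\Omega)$ with $\varphi_{n}\to\psi$ in $H_{0}^{1}(\Omega)$; pass to the positive part $\varphi_{n}^{+}$, which converges to $\psi^{+}=\psi$ in $H_{0}^{1}(\Omega)$ because $t\mapsto t^{+}$ is Lipschitz; then mollify $\varphi_{n}^{+}$ by a bump $\rho_{\varepsilon_{n}}$ with $\varepsilon_{n}$ chosen small enough that the mollification stays compactly supported in $\Omega$---one produces a sequence $\psi_{n}\in C_{c}^{\infty}(\Omega)\subset C_{0}^{1}(\overline{\Omega})$ with $\psi_{n}\geq 0$, $\psi_{n}\to\psi$ in $H_{0}^{1}(\Omega)$ and, along a subsequence, $\psi_{n}\to\psi$ a.e.\ in $\Omega$. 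Applying the hypothesis to each $\psi_{n}$ yields
$$\int_{\Omega}\nabla v\nabla\psi_{n}\;\geq\;\int_{\Omega}a(x)(g(v))^{-\gamma}g'(v)\psi_{n}+\lambda\int_{\Omega}(g(v))^{p}g'(v)\psi_{n}.$$
The left-hand side converges to $\int_{\Omega}\nabla v\nabla\psi$ by strong $H_{0}^{1}$-convergence; both integrands on the right are nonnegative because $v>0$ a.e.\ and $g,g'>0$, so Fatou's lemma applied term by term gives the desired inequality for $\psi$.

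For the comparison, observe that $(v_{0}-v)^{+}\in H_{0}^{1}(\Omega)$ is nonnegative, and $(v_{0}-v)^{+}\leq v_{0}\in L^{\infty}(\Omega)$ by Lemma \ref{L00}, so by Lemma \ref{L000} this function is admissible as a test in the equation for $v_{0}$. Testing the extended inequality for $v$ and the equation $-\Delta v_{0}=a(x)(g(v_{0}))^{-\gamma}g'(v_{0})$ with $(v_{0}-v)^{+}$ and subtracting leads, after noting that the integrands vanish outside $\{v_{0}>v\}$, to
\begin{align*}
\int_{\Omega}|\nabla(v_{0}-v)^{+}|^{2} &\leq \int_{\{v_{0}>v\}}a(x)\bigl[(g(v_{0}))^{-\gamma}g'(v_{0})-(g(v))^{-\gamma}g'(v)\bigr](v_{0}-v)^{+} \\
&\quad -\lambda\int_{\{v_{0}>v\}}(g(v))^{p}g'(v)(v_{0}-v)^{+}.
\end{align*}
By properties $(9)$ and $(11)$ of Lemma \ref{L1}, the map $t\mapsto (g(t))^{-\gamma}g'(t)$ is the product of two positive decreasing functions on $(0,\infty)$, hence itself decreasing; consequently the bracketed factor is $\leq 0$ on $\{v_{0}>v\}$, and the second term on the right is visibly $\leq 0$. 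Therefore $\|(v_{0}-v)^{+}\|=0$, which gives $v\geq v_{0}$ a.e.\ in $\Omega$.

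The main obstacle lies in the first stage: the approximation must simultaneously preserve nonnegativity, retain strong $H_{0}^{1}$-convergence, and ensure pointwise a.e.\ convergence along a subsequence, because the singular integrand $a(x)(g(v))^{-\gamma}g'(v)$ admits no usable dominating function in any dual norm of $\psi_{n}$, forcing the passage to the limit to rely solely on the pointwise monotone convergence encoded in Fatou's lemma. Once the inequality has been extended to $H_{0}^{1}(\Omega)_{+}$, the comparison step is routine, depending only on the monotonicity already built into Lemma \ref{L1}.
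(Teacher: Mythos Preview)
Your proof is correct and follows essentially the same route as the paper: density of nonnegative smooth functions in the nonnegative cone of $H_{0}^{1}(\Omega)$, passage to the limit via Fatou's lemma on the right-hand side, then a comparison with $v_{0}$ using $(v_{0}-v)^{+}=(v-v_{0})^{-}$ and the monotonicity coming from Lemma~\ref{L1}\,$(9),(11)$. One small remark: your appeal to Lemma~\ref{L000} for the admissibility of $(v_{0}-v)^{+}$ as a test function in the equation for $v_{0}$ is misplaced---that lemma concerns the hypotheses $(H)$ and $(H)_{d}$, not test-function integrability---but the conclusion is still valid simply because $v_{0}$ is a weak solution of $(Q_{0})$ and hence, by definition, accepts every $H_{0}^{1}(\Omega)$ test function.
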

\begin{proof}
	Let $\psi\in H_{0}^{1}(\Omega), \psi \geq 0$ in $\Omega$, then from the proof of Theorem 4.4 of \cite{C} there exists $\psi_{n}\in C_{0}^{\infty}(\overline{\Omega}),\psi_{n}\geq 0$ such that $\psi_{n}\to \psi $ in $H_{0}^{1}(\Omega)$ and $\psi_{n}\to \psi $ a.e. in $\Omega$. Hence, 
	$$
	\int_{\Omega} \nabla v \nabla \psi_{n} \geq\int_{\Omega}  a(x) (g(v))^{-\gamma}g^{\prime}(v)\psi_{n} +\lambda\int_{\Omega} (g(v))^{p}g^{\prime}(v)\psi_{n},$$
	and using the Fatou lemma we deduce that
	$$
	\int_{\Omega} \nabla v \nabla \psi \geq\int_{\Omega}  a(x) (g(v))^{-\gamma}g^{\prime}(v)\psi +\lambda\int_{\Omega} (g(v))^{p}g^{\prime}(v)\psi,$$
	proving the first statement of the lemma. 
	
	It remains to show that $v\geq v_{0}$ in $\Omega$. For this, we take $(v-v_{0})^{-}$ as a test function in the equation satisfied by
	$v_{0}$ and in the inequality  satisfied by  $v$, and arguing as in Theorem \ref{TS} one finds $v\geq v_{0}$ in $\Omega$. The proof is complete. 
\end{proof}

\section{Proof of Theorem \ref{T1}}
This section is devoted to the proof of Theorem \ref{T1}. In the rest of this paper we will use the same notation introduced in the previous section.

Let us define
\begin{align*}
\mathcal{L}&=\left\{\lambda>0: \mbox{problem}~\eqref{pa}~\mbox{has at least one solution}\right\}\\
&=\left\{\lambda>0: \mbox{problem}~\eqref{pq}~\mbox{has at least one solution}\right\}
\end{align*}
and set
$$\Lambda=\sup \mathcal{L}.$$

We start by proving the following lemma.
\begin{lemma}\label{L3}
	The set $\mathcal{L}$ is nonempty and $\Lambda$ is finite.
\end{lemma}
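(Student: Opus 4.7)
The statement combines two claims. For the bound $\Lambda<\infty$ I would appeal directly to Lemma \ref{L2}: it already exhibits a $\lambda^\ast>0$ such that \eqref{pa} has no solution for $\lambda>\lambda^\ast$, hence $\mathcal{L}\subseteq(0,\lambda^\ast]$ and $\Lambda\leq\lambda^\ast<\infty$. The real work is therefore to produce at least one $\lambda>0$ for which \eqref{pa} admits a solution.

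For that, my plan is to invoke the sub-supersolution Theorem \ref{TS}, taking as subsolution the function $v_0$ of Lemma \ref{L00}, which is a subsolution of \eqref{pa} for \emph{every} $\lambda>0$. It then suffices to build, for some small $\lambda>0$, a supersolution $\overline{v}\geq v_0$ belonging to $H_{0}^{1}(\Omega)\cap L^{\infty}(\Omega)$.

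I would try $\overline{v}=v_0+w$, where $w$ is the unique solution of $-\Delta w=1$ in $\Omega$ with $w=0$ on $\partial\Omega$; by elliptic regularity and Sobolev embedding $w\in C_{0}^{1}(\overline{\Omega})$, so $\overline{v}\in H_{0}^{1}(\Omega)\cap L^{\infty}(\Omega)$ and $\overline{v}\geq v_0>0$ in $\Omega$. For any $\psi\in H_{0}^{1}(\Omega)$ with $\psi\geq 0$, using that $v_0$ solves $(Q_{0})$,
$$
\int_{\Omega}\nabla\overline{v}\,\nabla\psi \;=\; \int_{\Omega} a(x)\,(g(v_0))^{-\gamma}g'(v_0)\,\psi \;+\; \int_{\Omega}\psi.
$$
By parts $(9)$ and $(11)$ of Lemma \ref{L1} the maps $g^{-\gamma}$ and $g'$ are nonincreasing on $(0,\infty)$, so $a(x)(g(v_0))^{-\gamma}g'(v_0)\geq a(x)(g(\overline{v}))^{-\gamma}g'(\overline{v})$ pointwise in $\Omega$; and by parts $(3)$ and $(5)$, $(g(\overline{v}))^{p}g'(\overline{v})\leq\Vert\overline{v}\Vert_{\infty}^{p}$ a.e. Combining these bounds, $\overline{v}$ satisfies the supersolution inequality as soon as $\lambda\Vert\overline{v}\Vert_{\infty}^{p}\leq 1$, which holds for all sufficiently small $\lambda>0$.

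Finally, the $L^1$-integrability conditions in the definition of supersolution follow at once from these estimates: the singular integrand is dominated by $a(x)(g(v_0))^{-\gamma}g'(v_0)\psi$, which lies in $L^{1}(\Omega)$ since $a(g(v_0))^{-\gamma}g'(v_0)\in L^{q}(\Omega)$ with $q>N$ by Lemma \ref{L00} and $\psi\in H_{0}^{1}(\Omega)\hookrightarrow L^{q/(q-1)}(\Omega)$, while the superlinear integrand is bounded. Theorem \ref{TS} then yields a solution of \eqref{pa} trapped between $v_0$ and $\overline{v}$, so the chosen $\lambda$ belongs to $\mathcal{L}$ and $\mathcal{L}\neq\emptyset$. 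There is no serious obstacle in this argument; the only mildly delicate step is the monotonicity estimate that controls the singular term at $\overline{v}$ by the same term at $v_0$, and this is handled cleanly by parts $(9)$ and $(11)$ of Lemma \ref{L1}.
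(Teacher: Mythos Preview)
Your proof is correct and essentially identical to the paper's: the paper defines $\overline{v}$ as the solution of the auxiliary problem $-\Delta v=a(x)(g(v_0))^{-\gamma}g'(v_0)+1$ with zero boundary data, which is precisely your $v_0+w$, and then uses the same monotonicity estimates (Lemma~\ref{L1}\,$(9),(11)$ for the singular term, $(3),(5)$ for the superlinear one) to verify the supersolution inequality for $\lambda$ small, followed by Theorem~\ref{TS} and Lemma~\ref{L2}. The only cosmetic difference is that you write $\overline{v}$ as an explicit sum rather than as the solution of an auxiliary Dirichlet problem.
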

\begin{proof}
Let $\underline{v}=v_{0}$  and consider the problem
\begin{equation*}\label{pauu}\tag{$T$}
\left\{
\begin{array}{l}
-\Delta v=a(x) (g(\underline{v}))^{-\gamma}g^{\prime}(\underline{v}) + 1 ~\mbox{in} ~ \Omega,\\
v> 0~\mbox{in}~ \Omega,\\
v(x)=0 ~\mbox{on}~ \partial \Omega.
\end{array}
\right.
\end{equation*}
By using Lemma \ref{L00} we infer that $a(x) (g(\underline{v}))^{-\gamma}g^{\prime}(\underline{v}) + 1 \in L^{q}(\Omega)$. Therefore problem \eqref{pauu} has a solution $\overline{v}\in W^{2,q}(\Omega)$ and by the Sobolev embedding theorem, $\overline{v}\in C^{1}_{0}(\overline{\Omega})$. Moreover,
$$-\Delta \overline{v}\geq a(x) (g(\underline{v}))^{-\gamma}g^{\prime}(\underline{v})=-\Delta \underline{v}~\mbox{in}~\Omega,$$
which implies that $\overline{v}\geq\underline{v}$ in $\Omega$. From this and Lemmas \ref{L1} $(9),(11)$ and  \ref{L00} we get that  
$$
\int_{\Omega} \nabla \overline{v} \nabla \psi \geq \int_{\Omega}  a(x) (g(\overline{v}))^{-\gamma}g^{\prime}(\overline{v})\psi +\lambda\int_{\Omega} (g(\overline{v}))^{p}g^{\prime}(\overline{v})\psi,$$
for all $\psi\in H_{0}^{1}(\Omega), \psi \geq 0$ in $\Omega$, and for $\lambda>0$ satisfying $\lambda \Vert (g(\overline{v}))^{p}g^{\prime}(\overline{v})\Vert_{\infty}\leq 1$. For such values of $\lambda$, we can apply  Theorem \ref{TS} to deduce the existence of a solution $v$ of \eqref{pa} such that $\underline{v}\leq v\leq \overline{v}$ in $\Omega$ (and consequently $v\in L^{\infty}(\Omega)$). Therefore
 $ \mathcal{L}\neq \emptyset$. 
 
 By Lemma \ref{L2} we obtain that $\Lambda$ is finite. The proof is complete.
\end{proof}
Following \cite{IY} we introduce
\begin{equation}\label{ext}
\lambda_{\ast}=\displaystyle \sup_{v\in S}\inf_{\psi \in \Phi}\left\{L(v,\psi)\right\}
\end{equation}
where 
$$L(v,\psi):=\dfrac{\displaystyle \int_{\Omega}\nabla v\nabla \psi-\displaystyle \int_{\Omega}a(x)(g(v))^{-\gamma}g^{\prime}(v)\psi}{\displaystyle \int_{\Omega}(g(v))^{p}g^{\prime}(v)\psi}$$
 is the extended functional  and
$$\Phi =\left\{\psi \in C^{1}_{0}(\overline{\Omega})\backslash\{0\} : \psi \geq 0 ~\mbox{in}~\Omega \right\},$$
$$S=\left\{v\in H_{0}^{1}(\Omega)\cap L^{\infty}(\Omega):v\geq C(v)d(x)~\mbox{in}~\Omega  \right\},$$
where $0<C(v)<\infty$ is a positive constant which can depend on $v$. If $v \in S$ then $v\geq k\phi_{1}$ in $\Omega$ for some $k>0$ (see Remark \ref{rem1} $c)$), and from Lemmas \ref{L0}, \ref{L1} and \ref{L000} it follows that $L$ is well defined. 

Some properties of $\lambda_{\ast}$ are stated in the following theorem.
\begin{theorem}\label{ast}The following properties hold true:
\begin{itemize}
	\item[$a)$] $0<\lambda_{\ast}<\infty$.
	\item[$b)$] $\lambda_{\ast}=\Lambda$.
\end{itemize}
\end{theorem}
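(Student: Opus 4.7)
The plan is to prove part $b)$ first, since part $a)$ will then follow from $b)$ combined with Lemmas \ref{L2} and \ref{L3}. The equality $\lambda_{\ast} = \Lambda$ will be obtained via the two inequalities $\lambda_{\ast} \geq \Lambda$ and $\lambda_{\ast} \leq \Lambda$, each resting on the interplay between the variational quotient $L$ and the sub-supersolution machinery of Theorem \ref{TS}.

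For $\lambda_{\ast} \geq \Lambda$, I would fix any $\lambda_{0} \in \mathcal{L}$ together with a corresponding solution $v_{\lambda_{0}}$ of \eqref{pa}. Because the equation satisfied by $v_{\lambda_{0}}$ yields in particular the supersolution hypothesis of Lemma \ref{L4}, it follows that $v_{\lambda_{0}} \geq v_{0}$ in $\Omega$; Lemma \ref{L00} then gives $v_{\lambda_{0}} \geq C\, d(x)$, so $v_{\lambda_{0}} \in S$. Testing the equation for $v_{\lambda_{0}}$ against any $\psi \in \Phi$ produces $L(v_{\lambda_{0}}, \psi) = \lambda_{0}$ identically in $\psi$, so $\inf_{\psi \in \Phi} L(v_{\lambda_{0}}, \psi) = \lambda_{0}$ and hence $\lambda_{\ast} \geq \lambda_{0}$. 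Taking the supremum over $\mathcal{L}$ yields $\lambda_{\ast} \geq \Lambda$.

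For the reverse inequality, I would fix $\lambda \in (0, \lambda_{\ast})$ and, by the definition of $\lambda_{\ast}$ as a sup-inf, choose $v \in S$ with $\inf_{\psi \in \Phi} L(v, \psi) > \lambda$. Rewriting $L(v, \psi) > \lambda$ gives
$$\int_{\Omega} \nabla v \nabla \psi \geq \int_{\Omega} a(x)(g(v))^{-\gamma} g^{\prime}(v) \psi + \lambda \int_{\Omega} (g(v))^{p} g^{\prime}(v) \psi$$
for every $\psi \in \Phi$. Lemma \ref{L4} then performs double duty: it extends this inequality to all nonnegative $\psi \in H_{0}^{1}(\Omega)$ — so $v$ is a genuine supersolution of \eqref{pa} at parameter $\lambda$ — and it simultaneously yields $v \geq v_{0}$ in $\Omega$. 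Setting $\underline{v} = v_{0}$ (a subsolution for every $\lambda > 0$ by Lemma \ref{L00}) and $\overline{v} = v$, Theorem \ref{TS} produces a solution of \eqref{pa} at level $\lambda$, so $\lambda \in \mathcal{L}$ and $\lambda \leq \Lambda$. Letting $\lambda \nearrow \lambda_{\ast}$ gives $\lambda_{\ast} \leq \Lambda$, completing $b)$.

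Part $a)$ is then immediate: Lemma \ref{L3} guarantees $0 < \Lambda < \infty$, and this transfers to $\lambda_{\ast}$ via $b)$. The main obstacle in the argument is the transition from the purely variational information $L(v, \psi) > \lambda$ with $\psi$ restricted to the small class $\Phi \subset C_{0}^{1}(\overline{\Omega})$ to a genuine supersolution compatible with Theorem \ref{TS}; both the extension to $H_{0}^{1}$ test functions and the lower bound $v \geq v_{0}$ needed to form an ordered pair $(\underline v,\overline v)$ are precisely what Lemma \ref{L4} was engineered to deliver, which is why that lemma is the pivot of the whole proof.
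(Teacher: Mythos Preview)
Your proof is correct and rests on exactly the same ingredients as the paper's: solutions of \eqref{pa} lie in $S$ with $\inf_{\psi\in\Phi}L(\cdot,\psi)$ equal to their parameter (yielding $\Lambda\le\lambda_{\ast}$), while any $v\in S$ with sufficiently large infimum becomes, via Lemma~\ref{L4}, a supersolution above $v_{0}$ so that Theorem~\ref{TS} applies (yielding $\lambda_{\ast}\le\Lambda$). The only difference is organizational---the paper proves $a)$ directly first (positivity from Lemma~\ref{L3}, finiteness by a contradiction argument that essentially reproduces half of $b)$), whereas you establish $b)$ first and then read off $a)$ from $\lambda_{\ast}=\Lambda$ and Lemma~\ref{L3}, which is a mild but genuine streamlining.
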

\begin{proof}
	$a)$ From Lemma \ref{L3} there exist $\lambda>0$ and $v\in H_{0}^{1}(\Omega)\cap L^{\infty}(\Omega), v>0$ in $\Omega$, such that 
$$
\int_{\Omega} \nabla v \nabla \psi =\int_{\Omega}  a(x) (g(v))^{-\gamma}g^{\prime}(v)\psi +\lambda\int_{\Omega} (g(v))^{p}g^{\prime}(v)\psi,$$
for all $\psi \in H_{0}^{1}(\Omega)$, which together with  Theorem 3 of Brezis-Nirenberg \cite{BN} implies that $v\in S$ and $0<\lambda=L(v,\psi)$ for all $\psi \in \Phi$. As a consequence we get
$$0<\lambda=\displaystyle \inf_{\psi \in \Phi}\left\{L(v,\psi)\right\}\leq \lambda_{\ast}.$$

To prove that $\lambda_{\ast}< \infty$, we argue by contradiction. Assume that $\lambda_{\ast}=\infty$. Then, by the definition of $\lambda_{\ast}$ there exists $v\in S$ such that $\Lambda <\lambda:=\displaystyle \inf_{\psi \in \Phi}\left\{L(v,\psi)\right\} $, that is,
$$
\int_{\Omega} \nabla v \nabla \psi \geq\int_{\Omega}  a(x) (g(v))^{-\gamma}g^{\prime}(v)\psi +\lambda\int_{\Omega} (g(v))^{p}g^{\prime}(v)\psi,$$
for all $\psi \in \Phi$. By using Lemma \ref{L4} we deduce that $v$ is a supersolution of \eqref{pa} and $v\geq v_{0}$ in $\Omega$. Moreover, from Lemma \ref{L00} one has that $v_{0}$ is a subsolution of \eqref{pa}. As a consequence we can apply Theorem \ref{TS}, with $\underline{v}=v_{0}$ and $\overline{v}=v$, to deduce the existence of a solution of \eqref{pa}, which implies $\lambda \leq \Lambda$, contradicting the fact that $\lambda >\Lambda$. Therefore $\lambda_{\ast}<\infty$.  

$b)$ Let $v\in S$ such that $0<\lambda=\displaystyle \inf_{\psi \in \Phi}\left\{L(v,\psi)\right\}$. Arguing as in $a)$ we can prove that problem \eqref{pa} has a solution, namely, $\lambda \in \mathcal{L}$ and since $\lambda$ is arbitrary, we have $\lambda_{\ast}=\displaystyle \sup_{v\in S}\inf_{\psi \in \Phi}\left\{L(v,\psi)\right\}\leq \Lambda$. We claim that $\lambda_{\ast}=\Lambda$. Otherwise, $\lambda_{\ast}<\Lambda$ and by the definition of $\Lambda$ there exists $\lambda>\lambda_{\ast}$ such that problem \eqref{pa} has a solution $v$. Again, arguing as in $a)$ we find that $v\in S$ and $\lambda=\displaystyle \inf_{\psi \in \Phi}\left\{L(v,\psi)\right\}\leq \lambda_{\ast}$,  contradicting the fact that $\lambda >\lambda_{\ast}$. Therefore $\lambda_{\ast}=\Lambda$. This finishes the proof.

\end{proof}

We are now in position to prove Theorem \ref{T1}.

\textbf{Proof of Theorem \ref{T1}} Let us show that problem \eqref{pa} has a solution for $\lambda \in (0,\lambda_{\ast})$ and no solution for $\lambda \in (\lambda_{\ast},\infty)$, where $\lambda_{\ast}$ is defined in \eqref{ext}. Let $\lambda \in (0,\lambda_{\ast})$. Then, by the definition of $\lambda^{\ast}$, there exists $z\in S$ such that $\lambda \leq L(z,\psi)$ for all $\psi\in \Phi$. We deduce from this inequality and Lemma \ref{L4} that $z$ is a supersolution of \eqref{pa} with $z\geq v_{0}$ in $\Omega$. Applying Theorem \ref{TS} with $\underline{v}=v_{0}$ and $\overline{v}=z$ we obtain that problem \eqref{pa} has a solution $v$ with $\underline{v}\leq v\leq \overline{v}$ in $\Omega$. To show that $v\in C_{0}^{1}(\overline{\Omega})$ we follow \cite{AR}. By Lemma \ref{L1} $(3),(5),(9),(11)$ and Lemma \ref{L00} we infer
$$a(x)g^{-\gamma}(v)g^{\prime}(v)\leq a(x)g^{-\gamma}(\underline{v})g^{\prime}(\underline{v})\in L^{q}(\Omega)$$
and
$$g^{p}(v)g^{\prime}(v)\leq\vert \overline{v}\vert^{p} \leq \Vert \overline{v}\Vert^{p}_{\infty}\in L^{\infty}(\Omega)$$
and as a consequence there exist $z_{1},z_{2}\in C_{0}^{1,\alpha}(\overline{\Omega})$, for some $\alpha \in (0,1)$, satisfying 
$$
\int_{\Omega} \nabla z_{1} \nabla \psi =\int_{\Omega}  a(x) (g(v))^{-\gamma}g^{\prime}(v)\psi ~\mbox{and}~\int_{\Omega} \nabla z_{2} \nabla \psi =\lambda\int_{\Omega} (g(v))^{p}g^{\prime}(v)\psi,$$
for all $\psi \in H_{0}^{1}(\Omega)$. From this we get
$$
\int_{\Omega} \nabla v \nabla \psi=\int_{\Omega} \nabla z_{1} \nabla \psi +\int_{\Omega} \nabla z_{2} \nabla \psi,$$
for all $\psi \in H_{0}^{1}(\Omega)$, which implies $v=z_{1}+z_{2}$, and hence $v\in C_{0}^{1,\alpha}(\overline{\Omega})$. Furthermore,  by the strong maximum principle and the Hopf lemma we find that $v\in int (C_{0}^{1}(\overline{\Omega})_{+})$.

Finally, from Theorem \ref{ast} we have $\lambda_{\ast}=\Lambda$ and by the definition of $\Lambda$ problem \eqref{pa} has no solution for $\lambda>\lambda_{\ast}=\Lambda$. This completes the proof of the theorem.

\section{Proof of Theorem \ref{T2}}

In this section we are going to prove Theorem \ref{T2}. In order to do this, we adapt the arguments carried out in \cite{ABC}. From now on, we will assume $(H)_{\infty}$ and $3<p< 22^{\ast}-1$ hold. Proceeding
as in Section 1  we can prove that:
\begin{itemize}
	\item $a\phi_{1}^{-1-\gamma},a\phi_{1}^{-\gamma}  \in L^{\infty}(\Omega)$.
	\item $ag^{-1-\gamma}(\varphi)g^{\prime}(\varphi), ag^{-1-\gamma}(\phi_{1})g^{\prime}(\phi_{1}) \in L^{\infty}(\Omega)$ and $ag^{1-\gamma}(\varphi) \in L^{\infty}(\Omega)$ if $\gamma \neq 1$, and $a(x)ln(g(\varphi))\in L^{\infty}(\Omega)$ if $\gamma=1$.
	\item if $v_{\lambda}$ is the solution obtained in Theorem \ref{T1}, then 
	 \begin{equation}\label{41}
	a(x)g^{-\gamma}(v_{\lambda})g^{\prime}(v_{\lambda})\in L^{\infty}(\Omega).
	\end{equation} 
\end{itemize}

We start by defining the functional 
\begin{equation}\label{j}
J_{\lambda}(v)=\frac{1}{2}\Vert v\Vert^{2}-\displaystyle \int_{\Omega}a(x) G(\vert v\vert)-\frac{\lambda}{p+1} \displaystyle \int_{\Omega}g^{p+1}(v), v\in D.
\end{equation}
It is worth recalling that $int (C_{0}^{1}(\overline{\Omega})_{+})\subset D$ (see \eqref{EF} and Remark \ref{rem2} ).  The functional $J_{\lambda}$ fails to be Fr\'echet differentiable in $H_{0}^{1}(\Omega)$ because of the singular term, then critical point theory could not be applied to obtain the existence of solutions directly.

 In this section, we denote by $I_{\lambda}$ the functional defined in \eqref{fun} of Theorem \ref{TS}.

An important property of the solution obtained in Theorem \ref{T1} is the following.

\begin{lemma}\label{minl}
	Let $0<\lambda<\lambda_{\ast}$. If $v_{\lambda}$ is the solution of \eqref{pa}  obtained in Theorem \ref{T1}, then $v_{\lambda}$ is a local minimum of $J_{\lambda}$ in the $C^{1}_{0}(\overline{\Omega})$ topology.
\end{lemma}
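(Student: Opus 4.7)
The plan is to sandwich $v_{\lambda}$ strictly inside an order interval $[v_{0},\overline{v}]$ in the $C^{1}_{0}(\overline{\Omega})$-interior sense, and exploit that $v_{\lambda}$ arises from Theorem \ref{TS} as the global minimum on $D$ of a truncated functional $I_{\lambda}$ which, on $[v_{0},\overline{v}]$, differs from $J_{\lambda}$ only by an additive constant.

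First I would construct a supersolution $\overline{v}$ of \eqref{pa} with $\overline{v}\geq v_{\lambda}$. Pick $\mu\in(\lambda,\lambda_{\ast})$; then $v_{\lambda}$ is a subsolution at parameter $\mu$, and arguing as in Lemma \ref{L3} one constructs a supersolution at $\mu$ lying above $v_{\lambda}$. Theorem \ref{TS} at parameter $\mu$ then delivers a solution $\overline{v}\in int(C^{1}_{0}(\overline{\Omega})_{+})$ with $\overline{v}\geq v_{\lambda}$; since $\mu>\lambda$, this $\overline{v}$ is a strict supersolution of \eqref{pa}. One may then replace the sub-supersolution pair of Theorem \ref{T1} by $(v_{0},\overline{v})$ and still recover $v_{\lambda}$ as the global minimum of the corresponding $I_{\lambda}$ on $D$.

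Next I would establish the strong comparisons $v_{\lambda}-v_{0},\,\overline{v}-v_{\lambda}\in int(C^{1}_{0}(\overline{\Omega})_{+})$. Writing $w_{1}:=v_{\lambda}-v_{0}\geq 0$, subtracting the equations for $v_{\lambda}$ and $v_{0}$, and linearizing the decreasing map $t\mapsto g^{-\gamma}(t)g^{\prime}(t)$ (Lemma \ref{L1}$(9),(11)$) via the mean value theorem, one obtains
\begin{equation*}
-\Delta w_{1}+c_{1}(x)w_{1}=\lambda (g(v_{\lambda}))^{p}g^{\prime}(v_{\lambda})\geq 0\ \text{in }\Omega,
\end{equation*}
where $c_{1}(x)\geq 0$ contains a term of the form $a(x)(g(\xi(x)))^{-\gamma-1}(g^{\prime}(\xi(x)))^{2}$ with $\xi\in[v_{0},v_{\lambda}]$. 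Using $v_{0}\geq C\phi_{1}$ (Lemma \ref{L00}) together with $(H)_{\infty}$ and Lemma \ref{L1}$(8)$ gives $c_{1}\in L^{\infty}(\Omega)$, so the strong maximum principle and the Hopf boundary lemma imply $w_{1}\in int(C^{1}_{0}(\overline{\Omega})_{+})$. The same linearization for $w_{2}:=\overline{v}-v_{\lambda}$ leaves the extra strictly positive term $(\mu-\lambda)(g(\overline{v}))^{p}g^{\prime}(\overline{v})$ on the right-hand side and yields $w_{2}\in int(C^{1}_{0}(\overline{\Omega})_{+})$ in the same way.

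From $w_{1},w_{2}\in int(C^{1}_{0}(\overline{\Omega})_{+})$ and the classical bound $\vert z(x)\vert \leq\Vert z\Vert_{C^{1}}d(x)$ for $z\in C^{1}_{0}(\overline{\Omega})$, I would pick $r>0$ such that $B_{r}(v_{\lambda})\subset\{v\in C^{1}_{0}(\overline{\Omega}):v_{0}\leq v\leq\overline{v}\}$. Finally, a direct evaluation of the primitive $\widetilde{G}$ from the proof of Theorem \ref{TS} shows that $I_{\lambda}(v)=J_{\lambda}(v)-C$ on $[v_{0},\overline{v}]$, with $C$ depending only on $v_{0}$; since $v_{\lambda}$ globally minimizes $I_{\lambda}$ on $D$, for every $v\in B_{r}(v_{\lambda})$ we get $J_{\lambda}(v)-J_{\lambda}(v_{\lambda})=I_{\lambda}(v)-I_{\lambda}(v_{\lambda})\geq 0$, which is the claim. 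The main obstacle is establishing the strict comparisons in $int(C^{1}_{0}(\overline{\Omega})_{+})$: linearizing the singular term produces a zeroth-order coefficient whose $L^{\infty}$-bound is precisely what the hypothesis $(H)_{\infty}$ buys and what the weaker $(H)$ cannot supply.
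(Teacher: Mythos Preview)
Your approach is essentially the paper's own proof: sandwich $v_\lambda$ strictly (in the $int(C^1_0(\overline{\Omega})_+)$ sense) between $\underline{v}=v_0$ and a supersolution $\overline{v}$ which is in fact a solution of $(Q_\mu)$ for some $\mu\in(\lambda,\lambda_\ast)$, obtain the strict comparisons $v_\lambda-\underline{v},\ \overline{v}-v_\lambda\in int(C^1_0(\overline{\Omega})_+)$ by linearising via the mean value theorem and invoking a strong maximum principle/Hopf-type result (the paper quotes Theorem~3 of Brezis--Nirenberg \cite{BN}, which is the appropriate tool since the zeroth-order coefficient is only in $L^\infty$ thanks to $(H)_\infty$), and then observe that $J_\lambda-I_\lambda$ is constant on the order interval so that the global minimality of $v_\lambda$ for $I_\lambda$ transfers to local minimality for $J_\lambda$.

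One point deserves care. You construct $\overline{v}$ \emph{a posteriori} by applying Theorem~\ref{TS} at level $\mu$ with $v_\lambda$ as subsolution, and then claim one can ``replace the sub-supersolution pair of Theorem~\ref{T1} by $(v_0,\overline{v})$ and still recover $v_\lambda$''. The phrase ``arguing as in Lemma~\ref{L3} one constructs a supersolution at $\mu$ lying above $v_\lambda$'' is not quite right: Lemma~\ref{L3} only produces a supersolution for small parameter, and for a generic $\mu<\lambda_\ast$ the supersolution $z$ comes from the variational characterisation~\eqref{ext}, which guarantees $z\geq v_0$ (Lemma~\ref{L4}) but not $z\geq v_\lambda$. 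Likewise, re-running Theorem~\ref{TS} with a new pair need not return the same $v_\lambda$. The paper sidesteps both issues by declaring, at the outset, that \emph{without loss of generality} the supersolution $\overline{v}$ used in the construction of $v_\lambda$ in Theorem~\ref{T1} is already a solution of $(Q_\mu)$; this is legitimate because any solution of $(Q_\mu)$ is a supersolution of \eqref{pa} dominating $v_0$, so Theorem~\ref{TS} then delivers $v_\lambda$ with $v_0\leq v_\lambda\leq\overline{v}$ built in. With this small adjustment your argument coincides with the paper's.
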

\begin{proof}
	Without loss of generality, we can assume that $\overline{v}$ is a solution of $(Q_{\mu})$ for some $\mu \in (\lambda,\lambda^{\ast})$. Hence, arguing as in Theorem \ref{T1} one has $\overline{v}\in C_{0}^{1}(\overline{\Omega})$ and by the strong maximum principle and the Hopf lemma we infer that $\overline{v}\in int (C_{0}^{1}(\overline{\Omega})_{+})$. Now, the proof is based on the following claims.\\
	${\it Claim ~1}$. $\overline{v}-v_{\lambda}\in int (C_{0}^{1}(\overline{\Omega})_{+})$. We have
	\begin{align*}
	-\Delta(\overline{v}- v_{\lambda})&\geq a(x) ((g(\overline{v}))^{-\gamma}g^{\prime}(\overline{v})-(g(v_{\lambda}))^{-\gamma}g^{\prime}(v_{\lambda}))\\& + \lambda(g(\overline{v}))^{p}g^{\prime}(\overline{v})-(g(v_{\lambda}))^{p}g^{\prime}(v_{\lambda}))
	\end{align*}
	and by the mean value theorem there exist measurable functions $\theta_{1}(x)$ and $ \theta_{2}(x)$ such that $v_{\lambda}(x)\leq \theta_{1}(x), \theta_{2}(x)\leq \overline{v}(x)$ for all $x\in \Omega$ and
	\begin{align}\label{40}
	-\Delta(\overline{v}- v_{\lambda})&\geq a(x) ((g(\theta_{1}(x)))^{-\gamma}g^{\prime}(\theta_{1}(x)))^{\prime}(\overline{v}(x) -v_{\lambda}(x))\\
	& + \lambda((g(\theta_{2}(x))))^{p}g^{\prime}(\theta_{2}(x))))^{\prime}(\overline{v}(x) -v_{\lambda}(x)). \nonumber
	\end{align}

	From the definition of $g^{\prime}$ and Lemma \ref{L1} $(3)$, it follows that 
	
	\begin{equation}\label{gi}
	(g^{-\gamma}(t)g^{\prime}(t))^{\prime}\geq -g^{-1-\gamma}(t)(\gamma+2g^{2}(t)),t>0,
	\end{equation}
	
	\begin{equation*}
	\vert (g^{p}(t)g^{\prime}(t))^{\prime}\vert \leq pg^{p-1}(t)+2g^{p+1}(t),t>0,
	\end{equation*}
	hold. Then, again by Lemma \ref{L1} $(3),(11)$ one has
	$$(g^{-\gamma}(\theta_{1}(x))g^{\prime}(\theta_{1}(x)))^{\prime}\geq -g^{-1-\gamma}(v_{\lambda}(x))(\gamma+2g^{2}(\Vert \overline{v}\Vert_{\infty})),$$
	$$\vert (g^{p}(\theta_{2}(x))g^{\prime}(\theta_{2}(x)))^{\prime}\vert \leq pg^{p-1}(\Vert \overline{v}\Vert_{\infty})+2g^{p+1}(\Vert \overline{v}\Vert_{\infty}),$$
	for all $x\in \Omega$. We set
	$$c_{1}=\Vert ag^{-1-\gamma}(v_{\lambda})\Vert_{\infty}(\gamma+2g^{2}(\Vert \overline{v}\Vert_{\infty})), c_{2}=p\lambda g^{p-1}(\Vert \overline{v}\Vert_{\infty})+2\lambda g^{p+1}(\Vert \overline{v}\Vert_{\infty})$$
	and $c=c_{1}+c_{2}$. With these estimates and definitions, in view of \eqref{40}, we get
	$$-\Delta(\overline{v}- v_{\lambda})\geq(-c_{1}-c_{2})(\overline{v}- v_{\lambda}) =-c(\overline{v}- v_{\lambda})$$
	that is
	$$-\Delta(\overline{v}- v_{\lambda})+c(\overline{v}- v_{\lambda})\geq 0~\mbox{in}~\Omega,$$
	and since $\overline{v}- v_{\lambda}\neq 0$, we can apply Theorem 3 of \cite{BN} to deduce the existence of constants $c_{3},c_{4}>0$ such that
	$$\overline{v}- v_{\lambda}\geq c_{3}d(x)\geq c_{4}\phi_{1}(x)~\mbox{in}~\Omega.$$
	
	As a consequence we obtain
	$$\dfrac{\partial(\overline{v}- v_{\lambda})}{\partial \nu}\leq c_{4}\dfrac{\partial \phi_{1}}{\partial \nu}<0~\mbox{on}~\partial \Omega,$$
	which jointly with $\overline{v}-v_{\lambda}>0$  in $\Omega$ means that  $\overline{v}-v_{\lambda}\in int (C_{0}^{1}(\overline{\Omega})_{+})$, and this proves the claim 1. \\
	${\it Claim ~2}$. $v_{\lambda}-\underline{v}\in int (C_{0}^{1}(\overline{\Omega})_{+})$. The proof is essentially equal to the one of Claim 1. Indeed, we set 
$$c_{1}=\Vert ag^{-1-\gamma}(\underline{v})\Vert_{\infty}(\gamma+2g^{2}(\Vert \overline{v}\Vert_{\infty})),$$
and from \eqref{gi} and  mean value theorem one has 

\begin{align*}
-\Delta(v_{\lambda}- \underline{v})&\geq a(x) ((g(\theta_{1}(x)))^{-\gamma}g^{\prime}(\theta_{1}(x)))^{\prime}(v_{\lambda}-\underline{v}) + \lambda (g(v_{\lambda}))^{p}g^{\prime}(v_{\lambda})\\
&\geq -c_{1}(v_{\lambda}-\underline{v})
\end{align*}
in $\Omega$, because $\underline{v}(x)\leq\theta_{1}(x)\leq v_{\lambda}(x)$, and since $v_{\lambda}-\underline{v}\neq 0$, we can apply Theorem 3 of \cite{BN} to deduce the existence of constants $c_{3},c_{4}>0$ such that
$$v_{\lambda}-\underline{v}\geq c_{3}d(x)\geq c_{4}\phi_{1}(x)~\mbox{in}~\Omega.$$

As a consequence we obtain
$$\dfrac{\partial(v_{\lambda}-\underline{v})}{\partial \nu}\leq c_{4}\dfrac{\partial \phi_{1}}{\partial \nu}<0~\mbox{on}~\partial \Omega,$$
which jointly with $v_{\lambda}-\underline{v}>0$  in $\Omega$ means that  $v_{\lambda}-\underline{v}\in int (C_{0}^{1}(\overline{\Omega})_{+})$, and this proves the claim 2. \\
${\it Claim ~3}$. There exists a ball $B=B_{\epsilon}(v_{\lambda})$ in the $C^{1}_{0}(\overline{\Omega})$ topology satisfying 
$$B \subset [\underline{v},\overline{v}]:=\left\{v\in C^{1}_{0}(\overline{\Omega}):\underline{v}\leq v\leq \overline{v}~\mbox{in}~\Omega\right\}. $$
From claims 1 and 2 there exists $\epsilon>0$ such that the balls $B_{1}=B_{\epsilon}(\overline{v}-v_{\lambda}), B_{2}=B_{\epsilon}(v_{\lambda}-\underline{v})\subset int (C_{0}^{1}(\overline{\Omega})_{+})$. We define $B=B_{\epsilon}(v_{\lambda})$. Let $v\in B$. Notice that
$$\overline{v}-B_{1}=B_{\epsilon}(v_{\lambda})~\mbox{and}~\underline{v}+B_{2}=B_{\epsilon}(v_{\lambda}), $$
and as a consequence there exist $z\in B_{1},w\in B_{2}$ with 
$$\underline{v}+w=v=\overline{v}-z,$$
which implies $\underline{v}<v<\overline{v}$ in $\Omega$, that is, $v \in [\underline{v},\overline{v}]$. Hence $B \subset [\underline{v},\overline{v}]$.

We can finally complete the proof of the lemma. Let $B$ as in Claim 3 and consider $v\in B$. Then, 
\begin{align*}
J_{\lambda}(v)-I_{\lambda}(v)&=-\frac{\lambda}{p+1} \displaystyle \int_{\Omega}g^{p+1}(v)+\lambda \displaystyle \int_{\Omega}\tilde{G}(x,v)\\
&=-\frac{\lambda}{p+1} \displaystyle \int_{\Omega}g^{p+1}(v)+\lambda \displaystyle \int_{\Omega} \int_{0}^{\underline{v}(x)}\tilde{g}(x,t)dtdx+\lambda \displaystyle \int_{\Omega} \int_{\underline{v}(x)}^{v(x)}\tilde{g}(x,t)dtdx\\
&=-\frac{\lambda}{p+1} \displaystyle \int_{\Omega}g^{p+1}(v)+\lambda \displaystyle \int_{\Omega} \int_{0}^{\underline{v}(x)}g^{p}(\underline{v}(x))g^{\prime}(\underline{v}(x))dtdx\\
&+\lambda \displaystyle \int_{\Omega} \int_{\underline{v}(x)}^{v(x)}g^{p}(t)g^{\prime}(t)dtdx\\
&=\lambda \displaystyle \int_{\Omega} g^{p}(\underline{v}(x))g^{\prime}(\underline{v}(x))\underline{v}(x)dx-\frac{\lambda}{p+1} \displaystyle \int_{\Omega}g^{p+1}(\underline{v}(x))dx:=c
\end{align*}
where $c$ is a constant. 

By virtue of the above equality, we obtain that $v_{\lambda}$ is a $C_{0}^{1}(\overline{\Omega})$-local minimizer of $J_{\lambda}$. This finishes the proof.
\end{proof}
\begin{remark}
	Since $\underline{v},\overline{v}\in int(C_{0}^{1}(\overline{\Omega})_{+})$, it follows that $[\underline{v},\overline{v}]\subset int(C_{0}^{1}(\overline{\Omega})_{+})$ and then, by Remark \ref{rem2}, $J_{\lambda}(v),I_{\lambda}(v)\in \mathbb{R}$ for all $v\in [\underline{v},\overline{v}]$. Furthermore, arguing as in Lemma \ref{L000} we infer
	$$\left\{v\in H_{0}^{1}(\Omega):\underline{v}\leq v\leq \overline{v}~\mbox{in}~\Omega\right\}\subset D.$$

\end{remark}
\begin{corollary}\label{c30}
	Let $B=B_{\epsilon}(0)+v_{\lambda}$ be as in the proof of Lemma \ref{minl}. Then for all $v\in B_{\epsilon}(0)$ we have 
	$$J_{\lambda}(v_{\lambda}+v^{+})-J_{\lambda}(v_{\lambda})\geq 0,$$
	holds.
\end{corollary}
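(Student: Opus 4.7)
The strategy is to exploit two facts already established during the proof of Lemma \ref{minl}: on the order interval $[\underline{v},\overline{v}]$ the functionals $J_\lambda$ and $I_\lambda$ differ by an additive constant, and the solution $v_\lambda$ is, by its construction through Theorem \ref{TS}, a global minimizer of $I_\lambda$ over $D$. The key observation is that even though $v^+$ is only Lipschitz (so $v_\lambda+v^+$ need not lie in the $C_0^1(\overline{\Omega})$-ball $B$), the weaker membership $v_\lambda+v^+ \in [\underline{v},\overline{v}]$ is enough for the argument, and this weaker fact can be obtained pointwise.

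Given $v \in B_\epsilon(0)$, the translate $v_\lambda+v$ lies in $B=B_\epsilon(v_\lambda)$, and Claim~3 in the proof of Lemma \ref{minl} gives the inclusion $B \subset [\underline{v},\overline{v}]$. Reading the upper half of this inclusion yields the pointwise inequality $v \leq \overline{v}-v_\lambda$ in $\Omega$; combining it with $\overline{v}-v_\lambda \geq 0$ (Claim~1 in the same proof) and passing to positive parts gives
$$0 \leq v^+ \leq \overline{v}-v_\lambda \quad \text{in } \Omega,$$
which is equivalent to $\underline{v} \leq v_\lambda \leq v_\lambda+v^+ \leq \overline{v}$ in $\Omega$. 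Since $v \in C_0^1(\overline{\Omega}) \subset H_0^1(\Omega)$ implies $v^+ \in H_0^1(\Omega)$, the Remark following Lemma \ref{minl} guarantees $v_\lambda+v^+ \in D$, so $J_\lambda(v_\lambda+v^+)$ is a well-defined real number.

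The explicit computation at the end of the proof of Lemma \ref{minl} shows that $J_\lambda(w)-I_\lambda(w)=c$, with $c$ depending only on $\underline{v}$, for every $w \in [\underline{v},\overline{v}] \cap D$. Applying this identity with $w=v_\lambda+v^+$ and with $w=v_\lambda$ cancels $c$ and reduces the inequality to be proved to
$$I_\lambda(v_\lambda+v^+) \geq I_\lambda(v_\lambda).$$
Since $v_\lambda$ was obtained via Theorem \ref{TS} (with the specific sub- and super-solutions fixed in Lemma \ref{minl}), it satisfies $I_\lambda(v_\lambda)=\inf_{z \in D} I_\lambda(z)$; as $v_\lambda+v^+ \in D$, the displayed inequality is immediate and the proof is complete.

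The only point requiring minor care is that taking $v^+$ destroys $C^1$-regularity, so one cannot directly invoke the $C_0^1(\overline{\Omega})$-local minimality of $v_\lambda$ for $J_\lambda$ proved in Lemma \ref{minl}. This is circumvented by the sign condition $\overline{v}-v_\lambda \geq 0$, which lets the pointwise bound $v \leq \overline{v}-v_\lambda$ survive the positive-part operation and puts $v_\lambda+v^+$ back into the order interval where the $J_\lambda$-$I_\lambda$ identity is available.
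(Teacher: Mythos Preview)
Your proof is correct and follows essentially the same approach as the paper's: both arguments place $v_\lambda+v^+$ in the order interval $[\underline{v},\overline{v}]$, invoke the identity $J_\lambda-I_\lambda=c$ from the end of the proof of Lemma~\ref{minl}, and finish with the global minimality of $v_\lambda$ for $I_\lambda$ granted by Theorem~\ref{TS}. The only cosmetic difference is that for the upper bound $v_\lambda+v^+\le\overline{v}$ the paper argues by contradiction pointwise, whereas you observe directly that $v\le\overline{v}-v_\lambda$ with a nonnegative right-hand side survives taking positive parts; both are equally elementary.
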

\begin{proof}
	As we have seen in the proof of Lemma \ref{minl},
	\begin{equation}\label{30}
	\underline{v}<v_{\lambda}+v<\overline{v}~\mbox{in}~\Omega,
	\end{equation}
for all $v\in B_{\epsilon}(0)$. We claim that 
$$	\underline{v}<v_{\lambda}+v^{+}<\overline{v}~\mbox{in}~\Omega,$$
for all $v\in B_{\epsilon}(0)$. Indeed, by using \eqref{30} one has
$$\underline{v}<v_{\lambda}+v=v_{\lambda}+v^{+}-v^{-}\leq v_{\lambda}+v^{+}~\mbox{in}~\Omega.$$

Now, let us show that $v_{\lambda}+v^{+}<\overline{v}$ in $\Omega$. Arguing by contradiction, suppose that there exists $x\in \Omega$ such that $v_{\lambda}(x)+v^{+}(x)\geq \overline{v}(x)$. Then, from $v_{\lambda}(x)<\overline{v}(x)$ we infer that $v(x)>0$, and therefore $v^{-}(x)=0$. Thus, the inequality \eqref{30} implies
$$\overline{v}(x)\leq v_{\lambda}(x)+v^{+}(x)=v_{\lambda}(x)+v^{+}(x)-v^{-}(x)=v_{\lambda}(x)+v(x)<\overline{v}(x),$$
a contradiction.

Finally, we can argue as in the proof of Lemma \ref{minl} to get 
$$J_{\lambda}(v_{\lambda}+v^{+})-I_{\lambda}(v_{\lambda}+v^{+})=c,$$
where $c$ is a constant, and since $v_{\lambda}+v^{+}\in H^{1}_{0}(\Omega)$, by Theorem \ref{TS}, we deduce that
$$J_{\lambda}(v_{\lambda}+v^{+})-J_{\lambda}(v_{\lambda})=I_{\lambda}(v_{\lambda}+v^{+})-I_{\lambda}(v_{\lambda})\geq 0,$$
proving the corollary.

\end{proof}

For fixed $\lambda \in (0,\lambda_{\ast})$, we look for a second solution in the form $z=w+v$, where $v\gneqq 0$ and $w=v_{\lambda}$ is the solution found in the preceding lemma. A straight calculation shows that $v$ satisfies
\begin{align}\label{pm}
-\Delta v&=a(x)( (g(w+v))^{-\gamma}g^{\prime}(w+v)- (g(w))^{-\gamma}g^{\prime}(w)) \\
&+ \lambda((g(w+v))^{p}g^{\prime}(w+v)-(g(w))^{p}g^{\prime}(w)).\nonumber
\end{align}
Denote by $g_{\lambda}(x,t)$ the right hand side of the preceding equation (with $g_{\lambda}(x,t)=0$ for $t\leq 0$) and set
\begin{equation}\label{J}
\mathcal{J}_{\lambda}(v)=\frac{1}{2}\Vert v\Vert^{2}\displaystyle -\displaystyle \int_{\Omega}G_{\lambda}(x,v), 
\end{equation}
where 

\begin{equation*}
	G_{\lambda}(x,t)=\displaystyle \int_{0}^{t}g_{\lambda}(x,s)ds=\left\{
	\begin{array}{l}
		0 ~\mbox{if} ~t\leq 0,\\
		H_{1}(x,t)+H_{2}(x,t)+H_{3}(x,t) ~\mbox{if} ~ t\geq 0,
	\end{array}
	\right.
\end{equation*}
and
$$H_{1}(x,t)=a(x)(G(w+t)-G(w)),$$
$$H_{2}(x,t)=\dfrac{\lambda}{p+1}(g^{p+1}(w+t)-g^{p+1}(w)),$$
$$H_{3}(x,t)=-a(x)g^{-\gamma}(w)g^{\prime}(w)t-\lambda(g(w))^{p}g^{\prime}(w)t,$$

for $t\geq 0$.

 We observe that by $(H)_{\infty}$, Lemma \ref{L1} $(3),(6),(11)$ and \eqref{41} one has
\begin{equation}\label{45}
\vert g_{\lambda}(x,t)\vert \leq c_{1}+2^{p/4}\lambda c_{2}\vert t\vert^{p/2},
\end{equation}
where $c_{1},c_{2}>0$ are constants which depends of $\Vert ag^{-\gamma}(w)g^{\prime}(w) \Vert_{\infty}$, $\Vert w\Vert_{\infty}$ and $p$. From this it follows that $\mathcal{J}_{\lambda}\in C^{1}(H_{0}^{1}(\Omega),\mathbb{R})$. 

We shall use the Mountain Pass Theorem to prove the existence of a nontrivial solution to \eqref{pm}. In order to do this, we need some preliminary lemmas.
\begin{lemma}\label{L32}
	$v=0$ is a local minimum of $\mathcal{J}_{\lambda}$ in $H_{0}^{1}(\Omega)$. 
\end{lemma}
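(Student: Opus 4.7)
The plan is to argue by contradiction and to transfer the $C^{1}_{0}(\overline{\Omega})$-local-minimum information encoded in Lemma \ref{minl} and Corollary \ref{c30} to the $H^{1}_{0}(\Omega)$ topology, via the Brezis--Nirenberg technique of minimizing $\mathcal{J}_{\lambda}$ on small closed $H^{1}_{0}$-balls and upgrading the minimizer by elliptic regularity.

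First I would record two identities. Since $g_{\lambda}(x,s)\equiv 0$ for $s\le 0$, one has $G_{\lambda}(x,v)=G_{\lambda}(x,v^{+})$ and $\|v\|^{2}=\|v^{+}\|^{2}+\|v^{-}\|^{2}$, so
\[
\mathcal{J}_{\lambda}(v)\ge \mathcal{J}_{\lambda}(v^{+}),
\]
with strict inequality unless $v^{-}=0$. Next, expanding $G_{\lambda}=H_{1}+H_{2}+H_{3}$ and using $v_{\lambda}$ as a solution of \eqref{pa} tested against $v\in H^{1}_{0}(\Omega)$ (licit by \eqref{41}), one verifies that
\[
\mathcal{J}_{\lambda}(v)=J_{\lambda}(v_{\lambda}+v)-J_{\lambda}(v_{\lambda})\qquad\text{for every } v\ge 0.
\]
Hence the lemma reduces to proving that $J_{\lambda}(v_{\lambda}+v)\ge J_{\lambda}(v_{\lambda})$ whenever $v\ge 0$ is small in $H^{1}_{0}(\Omega)$.

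Suppose the conclusion fails. Then there is a sequence $r_{n}\downarrow 0$ with
\[
m_{n}:=\inf_{\|v\|\le r_{n}}\mathcal{J}_{\lambda}(v)<0.
\]
Exploiting \eqref{45} together with Rellich's compact embedding and the subcriticality $(p+1)/2<2^{*}$ implied by $p<2\cdot 2^{*}-1$, $\mathcal{J}_{\lambda}$ is sequentially weakly lower semicontinuous on bounded sets, so the infimum is attained at some $u_{n}$. The identities above let us replace $u_{n}$ by $u_{n}^{+}\ge 0$, which remains in the ball and still minimizes $\mathcal{J}_{\lambda}$, so we may assume $u_{n}\ge 0$. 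The Lagrange multiplier rule yields $\mu_{n}\ge 0$ such that
\[
-\Delta u_{n}=(1+\mu_{n})^{-1} g_{\lambda}(x,u_{n})\quad\text{weakly in } H^{1}_{0}(\Omega).
\]
Since $\|u_{n}\|\le r_{n}\to 0$, Rellich gives $u_{n}\to 0$ in every $L^{q}$ with $q<2^{*}$. Combined with \eqref{45} and $p/2<2^{*}$, a standard $L^{q}$-elliptic bootstrap upgrades this to $u_{n}\to 0$ in $W^{2,q}_{0}(\Omega)$ for some $q>N$, and hence in $C^{1}_{0}(\overline{\Omega})$. For $n$ large, $u_{n}$ falls in the $C^{1}_{0}$-ball supplied by Corollary \ref{c30}, so
\[
\mathcal{J}_{\lambda}(u_{n})=J_{\lambda}(v_{\lambda}+u_{n})-J_{\lambda}(v_{\lambda})\ge 0,
\]
contradicting $\mathcal{J}_{\lambda}(u_{n})=m_{n}<0$.

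The main obstacle is the regularity bootstrap, specifically the uniform control of the singular contribution $a(x)\bigl(g^{-\gamma}(v_{\lambda}+u_{n})g'(v_{\lambda}+u_{n})-g^{-\gamma}(v_{\lambda})g'(v_{\lambda})\bigr)$. Applying the mean value theorem to $s\mapsto g^{-\gamma}(s)g'(s)$ (whose derivative is dominated by $g^{-1-\gamma}(s)(\gamma+2g^{2}(s))$ as already noted in \eqref{gi}) and using the $L^{\infty}$-bound $a g^{-1-\gamma}(v_{\lambda})g'(v_{\lambda})\in L^{\infty}(\Omega)$ furnished by $(H)_{\infty}$ and \eqref{41}, the singular difference is controlled by a uniformly bounded multiple of $u_{n}$. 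Only the subcritical superlinear term then genuinely drives the iteration, and the hypothesis $p<2\cdot 2^{*}-1$ is used exactly to keep $(p+1)/2$ strictly below the critical exponent.
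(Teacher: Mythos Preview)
Your argument is correct and follows the paper's route: both derive the identity $\mathcal{J}_\lambda(v)=\tfrac{1}{2}\|v^{-}\|^{2}+J_\lambda(v_\lambda+v^{+})-J_\lambda(v_\lambda)$ and combine it with Corollary~\ref{c30} to conclude that $0$ is a $C^1_0(\overline{\Omega})$-local minimizer of $\mathcal{J}_\lambda$. The paper then simply invokes Theorem~1 of \cite{BN} (legitimate thanks to the growth estimate~\eqref{45}), whereas you reproduce that theorem's proof by minimizing on shrinking $H^1_0$-balls and bootstrapping; your concern about the singular contribution is unnecessary, since by~\eqref{41} it is uniformly in $L^\infty$ and is already absorbed into the constant $c_1$ of~\eqref{45}.
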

\begin{proof}
	We write $v=v^{+}-v^{-}$. Using the fact that $w$ is a solution of \eqref{pa} and $G(x,t)=0$ for $t\leq 0$ we get
	\begin{align*}
	\mathcal{J}_{\lambda}(v)&=\frac{1}{2}\Vert v^{+}\Vert^{2}+\frac{1}{2}\Vert v^{-}\Vert^{2}\displaystyle -\displaystyle \int_{\Omega}G_{\lambda}(x,v^{+})+\frac{1}{2}\Vert w+ v^{+}\Vert^{2}-\frac{1}{2}\Vert w+v^{+}\Vert^{2}\\
	&=\frac{1}{2}\Vert v^{-}\Vert^{2}-\int_{\Omega}\nabla w\nabla v^{+}+\int_{\Omega}  a(x) (g(w))^{-\gamma}g^{\prime}(w)v^{+} +\lambda\int_{\Omega}(g(w))^{p}g^{\prime}(w)v^{+}\\
	&+\frac{1}{2}\Vert w+ v^{+}\Vert^{2}-\displaystyle \int_{\Omega}a(x) G(w+ v^{+})-\frac{\lambda}{p+1} \displaystyle \int_{\Omega}g^{p+1}(w+v^{+})\\
	&-\frac{1}{2}\Vert w\Vert^{2}+ \displaystyle \int_{\Omega}a(x) G(w)+\frac{\lambda}{p+1} \displaystyle \int_{\Omega}g^{p+1}(w)\\
	&=\frac{1}{2}\Vert v^{-}\Vert^{2}+J_{\lambda}(w+v^{+})-J_{\lambda}(w).
	\end{align*}
This and Corollary \ref{c30} imply that $\mathcal{J}_{\lambda}(v)\geq 0$ for all $v\in B_{\epsilon}(0)$, where $B_{\epsilon}(0)$ is as in Corollary \ref{c30}. This proves that $v=0$ is a local minimum in the $C^{1}_{0}(\overline{\Omega})$ topology. Therefore, in view of \eqref{45}, Theorem 1 in \cite{BN} applies and $v=0$ is a local minimum of $\mathcal{J}_{\lambda}$ in the $H_{0}^{1}(\Omega)$ topology. This finishes the proof.

\end{proof}

\begin{lemma}\label{L300}
	If $v,w\in L^{\infty}(\Omega)\cap D$ are positive functions, then 
	$$\displaystyle \lim_{t\to \infty}\int_{\Omega}\dfrac{a(x)G(v+tw)}{t^{(p+1)/2}}=0$$
	and 
	$$\displaystyle \int_{\Omega}g^{p+1}(v+tw)\geq t^{(p+1)/2}\int_{\Omega}g^{p+1}(\dfrac{v}{t}+w),$$
	for all $t>1$. 
\end{lemma}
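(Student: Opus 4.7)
The plan is to prove the two displayed statements separately. For the second inequality, I would write $v + tw = t(v/t + w)$, set $s = v/t + w$, and note that $s \geq 0$ since $v$ and $w$ are positive. Property (13) of Lemma \ref{L1} then gives $g^2(ts) \geq t g^2(s)$ for $t \geq 1$; raising both sides (which are nonnegative) to the power $(p+1)/2$ yields $g^{p+1}(v+tw) \geq t^{(p+1)/2} g^{p+1}(v/t + w)$ pointwise almost everywhere in $\Omega$, and integration delivers the stated inequality.

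For the limit in the first statement, the approach is the dominated convergence theorem, with the dominating function depending on the regime of $\gamma$. The pointwise limit at each $x$ is $0$: for $\gamma > 1$ the quantity $G(v+tw)$ even stays bounded in $t$, for $0 < \gamma < 1$ it grows no faster than $t^{(1-\gamma)/2}$, and for $\gamma = 1$ only logarithmically, so in every case the denominator $t^{(p+1)/2}$ (with $p > 3$) dominates.

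For $\gamma > 1$, Lemma \ref{L1} (10) says that $g^{1-\gamma}$ is decreasing on $(0,\infty)$, hence $|G(v+tw)| \leq |G(v)|$, and $a|G(v)| \in L^1(\Omega)$ because $v \in D$. For $0 < \gamma < 1$, Lemma \ref{L1} (6) yields $g^{1-\gamma}(v+tw) \leq 2^{(1-\gamma)/4}(v+tw)^{(1-\gamma)/2} \leq C t^{(1-\gamma)/2}$ for $t \geq 1$, using $v, w \in L^\infty(\Omega)$; since $a \in L^\infty(\Omega)$ under $(H)_\infty$, dividing by $t^{(p+1)/2}$ leaves a dominating function that tends to $0$ uniformly.

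The most delicate case is $\gamma = 1$, where $G(s) = \ln g(s)$ changes sign, so I would control the positive and negative parts of $\ln g(v+tw)$ separately. For the positive part, Lemma \ref{L1} (6) combined with $v, w \in L^\infty(\Omega)$ gives $g(v+tw) \leq 2^{1/4}(v+tw)^{1/2} \leq Ct^{1/2}$ for $t \geq 1$, so $(\ln g(v+tw))^+ \leq \ln C + \frac{1}{2}\ln t$. For the negative part, since $g$ is increasing and $v \leq v + tw$, whenever $g(v+tw) < 1$ one also has $g(v) \leq g(v+tw) < 1$, and hence $(\ln g(v+tw))^- \leq |\ln g(v)|$; because $v \in D$ we have $a|\ln g(v)| \in L^1(\Omega)$. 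Adding the two bounds and dividing by $t^{(p+1)/2}$ produces an integrable dominating function whose pointwise limit is $0$, so dominated convergence closes the argument. The main obstacle is precisely this two-sided control for $\gamma = 1$; the other cases reduce to direct uses of the pointwise monotonicity properties of $g$.
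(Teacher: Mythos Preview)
Your proposal is correct and follows essentially the same approach as the paper: property (13) of Lemma \ref{L1} for the second inequality, and dominated convergence with a case split on $\gamma$ for the first. The only cosmetic differences are that in the cases $0<\gamma<1$ and $\gamma=1$ you bound $g$ via property (6) (so $g(s)\le 2^{1/4}|s|^{1/2}$), whereas the paper uses property (5) ($g(s)\le |s|$) together with $\ln x\le x$; and for $\gamma=1$ you phrase the two-sided control as a positive/negative part decomposition while the paper writes it directly as $a\ln g(v)/t^{(p+1)/2}\le a G(v+tw)/t^{(p+1)/2}\le a(v+w)$. These lead to the same dominating functions and the same conclusion.
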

\begin{proof}
	First we prove the limit. We divide the proof into three cases.\\
	{\it Case 1.} $\gamma < 1$. In this case, by Lemma \ref{L1} $(5)$ one has
	$$0<\dfrac{a(x)G(v+tw)}{t^{(p+1)/2}}=\dfrac{a(x)g^{1-\gamma}(v+tw)}{(1-\gamma)t^{(p+1)/2}}\leq\dfrac{a(x)(\dfrac{v}{t}+w)^{1-\gamma}}{(1-\gamma)t^{((p+1)/2)+\gamma-1}}\leq \dfrac{a(x)(v+w)^{1-\gamma}}{1-\gamma}, $$
	for all $t\geq 1$. Then taking the limit as $t\to \infty$ we get
	$$\dfrac{a(x)G(v+tw)}{t^{(p+1)/2}}\to 0,$$
	and from the Lebesgue dominated convergence theorem we find
	$$\displaystyle \lim_{t\to \infty}\int_{\Omega}\dfrac{a(x)G(v+tw)}{t^{(p+1)/2}}=0.$$
	This proves the case 1. \\
	{\it Case 2.} $\gamma=1$. By Lemma \ref{L1} $(3),(5)$
	\begin{align*}
	\dfrac{a(x)ln(g(v))}{t^{(p+1)/2}}\leq\dfrac{a(x)G(v+tw)}{t^{(p+1)/2}}=\dfrac{a(x)ln(g(v+tw))}{t^{(p+1)/2}}\leq\dfrac{a(x)(\dfrac{v}{t}+w)}{t^{((p+1)/2)-1}}\leq a(x)(v+w)
	\end{align*}
	for all $t\geq 1$, and thus 
	
	$$\vert \dfrac{a(x)G(v+tw)}{t^{(p+1)/2}}\vert \leq \max \left\{\vert a(x)ln(g(v))\vert,a(x)(v+w)  \right\}.$$
	
	 Again, by the  Lebesgue dominated convergence theorem we have
	$$\displaystyle \lim_{t\to \infty}\int_{\Omega}\dfrac{a(x)G(v+tw)}{t^{(p+1)/2}}=0.$$\\
	{\it Case 3.} $\gamma >1$.  By Lemma \ref{L1} $(3),(10)$ one has
	$$0<\vert \dfrac{a(x)G(v+tw)}{t^{(p+1)/2}}\vert =\dfrac{a(x)g^{1-\gamma}(v+tw)}{\vert 1-\gamma \vert t^{(p+1)/2}}\leq\dfrac{a(x)g^{1-\gamma}(v)}{\vert 1-\gamma\vert t^{(p+1)/2}}\leq \dfrac{a(x)g^{1-\gamma}(v)}{\vert 1-\gamma\vert }, $$
	for all $t\geq 1$. By the  Lebesgue dominated convergence theorem one finds
	$$\displaystyle \lim_{t\to \infty}\int_{\Omega}\dfrac{a(x)G(v+tw)}{t^{(p+1)/2}}=0.$$

	We now fix $t>1$. Then, from Lemma \ref{L1} $(13)$ we have
	$$g^{p+1}(v+tw)= \left[g^{2}(t(\dfrac{v}{t}+w))\right]^{(p+1)/2}\geq\left[tg^{2}(\dfrac{v}{t}+w)\right]^{(p+1)/2} =t^{(p+1)/2}g^{p+1}(\dfrac{v}{t}+w),$$
	and this implies that
	$$\displaystyle \int_{\Omega}g^{p+1}(v+tw)\geq t^{(p+1)/2}\int_{\Omega}g^{p+1}(\dfrac{v}{t}+w),$$
	for all $t>1$. The lemma is proved.
\end{proof}

\begin{lemma}\label{sps}
	Let $2<\theta<p+1$. Then, for all $t\geq 0$,
	\begin{itemize}
		\item[a)] $-G_{\lambda}(x,t)+\dfrac{\theta}{p+1}g_{\lambda}(x,t)t\geq c-\dfrac{a(x)}{1-\gamma}t^{1-\gamma} $ for some constant $c\in \mathbb{R}$, if $0<\gamma<1$;
		\item[b)] $-G_{\lambda}(x,t)+\dfrac{\theta}{p+1}g_{\lambda}(x,t)t\geq c-a(x)t +a(x)ln(g(w))$ for some constant $c\in \mathbb{R}$, if $\gamma=1$;
		\item[c)] $-G_{\lambda}(x,t)+\dfrac{\theta}{p+1}g_{\lambda}(x,t)t\geq c+\dfrac{a(x)}{1-\gamma}g^{1-\gamma}(w) $ for some constant $c\in \mathbb{R}$, if $\gamma>1$.
	\end{itemize}
\end{lemma}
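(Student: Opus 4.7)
The plan is to decompose $-G_{\lambda}(x,t)+\frac{\theta}{p+1}g_{\lambda}(x,t)t$ according to the three summands $H_{1},H_{2},H_{3}$ in the definition of $G_{\lambda}$ and bound each piece separately, invoking both ends of the hypothesis $2<\theta<p+1$. Setting $\alpha=\theta/(p+1)\in(0,1)$ and noting that $H_{3}(x,t)=H_{3}'(x,\cdot)\,t$ is linear in $t$ with $H_{3}'=-a(x)g^{-\gamma}(w)g'(w)-\lambda g^{p}(w)g'(w)$, the $H_{3}$ contribution is
$$-H_{3}+\alpha H_{3}'t=(1-\alpha)\bigl[a(x)g^{-\gamma}(w)g'(w)+\lambda g^{p}(w)g'(w)\bigr]t\geq 0$$
for every $t\geq 0$, so it is harmless.

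For the superlinear piece $H_{2}$, the contribution equals $\frac{\lambda}{p+1}[g^{p+1}(w)-g^{p+1}(w+t)+\theta g^{p}(w+t)g'(w+t)t]$. Property $(4)$ of Lemma \ref{L1}, namely $(w+t)g'(w+t)\geq g(w+t)/2$, combined with $t=(w+t)-w$, yields
$$\theta g^{p}(w+t)g'(w+t)t\geq \tfrac{\theta}{2}g^{p+1}(w+t)-\theta\Vert w\Vert_{\infty}g^{p}(w+t)g'(w+t).$$
Hence the $H_{2}$ contribution is at least $\frac{\lambda}{p+1}[g^{p+1}(w)+f(w+t)]$ with $f(s)=(\tfrac{\theta}{2}-1)g^{p+1}(s)-\theta\Vert w\Vert_{\infty}g^{p}(s)g'(s)$. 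Since $\theta>2$, $f$ is continuous on $[0,\infty)$ with $f(0)=0$ and $f(s)\to+\infty$ as $s\to\infty$ (because $g^{p+1}(s)$ is of order $s^{(p+1)/2}$ while $g^{p}(s)g'(s)$ is only of order $s^{(p-1)/2}$ by properties $(3)$ and $(12)$), so $f$ admits a uniform lower bound and the whole $H_{2}$ contribution admits a constant lower bound $c_{2}\in\mathbb{R}$.

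The $H_{1}$ contribution is $a(x)[G(w)-G(w+t)]+\alpha a(x)g^{-\gamma}(w+t)g'(w+t)t$; the last term is nonnegative and will be dropped. I would then resolve the three cases using the explicit form of $G$. For $0<\gamma<1$, combine $g(s)\leq s$ (property $(5)$) with the subadditivity $(w+t)^{1-\gamma}\leq\Vert w\Vert_{\infty}^{1-\gamma}+t^{1-\gamma}$ (valid because $0<1-\gamma<1$) to bound $a(x)[G(w)-G(w+t)]$ from below by $c_{1}'-\frac{a(x)}{1-\gamma}t^{1-\gamma}$. For $\gamma=1$, use $\ln g(w+t)\leq\ln(w+t)\leq C(\Vert w\Vert_{\infty})+t$, obtained by splitting $w+t\leq 1$ and $w+t\geq 1$ and invoking $\ln y\leq y$, so the $H_{1}$ contribution is at least $a(x)\ln g(w)-C\Vert a\Vert_{\infty}-a(x)t$. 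For $\gamma>1$, rewrite $G(w)-G(w+t)=\frac{1}{\gamma-1}[g^{1-\gamma}(w+t)-g^{1-\gamma}(w)]$; the summand $\frac{a(x)}{\gamma-1}g^{1-\gamma}(w+t)\geq 0$ is discarded, leaving the surviving term $\frac{a(x)}{1-\gamma}g^{1-\gamma}(w)$.

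Summing the three contributions in each case and absorbing all $\Vert a\Vert_{\infty}\Vert w\Vert_{\infty}$-type terms into a single constant $c\in\mathbb{R}$ produces precisely $(a)$, $(b)$, $(c)$. The main obstacle is the analysis of $H_{2}$: the condition $\theta>2$ (so that $\tfrac{\theta}{2}-1>0$), together with $p\geq 3$, is essential in order that $g^{p+1}(s)$ dominates $g^{p}(s)g'(s)$ uniformly on $[0,\infty)$; without $\theta>2$ the $H_{2}$ contribution would carry an unbounded negative piece. The $\gamma=1$ subcase of $H_{1}$ also requires minor care because $\ln g(w+t)$ may be very negative near $\partial\Omega$, but such negative values only improve the desired lower bound, so only the upper estimate on $\ln g(w+t)$ is needed.
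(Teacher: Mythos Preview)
Your proof is correct and follows essentially the same route as the paper: the same decomposition into the $H_1,H_2,H_3$ pieces, the identical treatment of the $H_1$ and $H_3$ contributions, and only a cosmetic variant for $H_2$ (you bound the auxiliary function $f(s)=(\tfrac{\theta}{2}-1)g^{p+1}(s)-\theta\Vert w\Vert_{\infty}g^{p}(s)g'(s)$ via continuity and asymptotics, whereas the paper splits explicitly at $\bar t=2\Vert w\Vert_{\infty}/(\theta-2)$). The two arguments are interchangeable.
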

\begin{proof}
	For convenience of notation we write
	$$h_{1}(x,t)=a(x) (g(w+t))^{-\gamma}g^{\prime}(w+t),$$
	$$h_{2}(x,t)=\lambda(g(w+t))^{p}g^{\prime}(w+t),$$
	$$h_{3}(x,t)=-a(x) (g(w))^{-\gamma}g^{\prime}(w)-\lambda(g(w))^{p}g^{\prime}(w),$$
	for $t\geq 0$. Thus, 
	\begin{align*}
	-G_{\lambda}(x,t)+\dfrac{\theta}{p+1}g_{\lambda}(x,t)t&=-H_{1}(x,t)+\dfrac{\theta}{p+1}h_{1}(x,t)t\\
	&-H_{2}(x,t)+\dfrac{\theta}{p+1}h_{2}(x,t)t\\
	&-H_{3}(x,t)+\dfrac{\theta}{p+1}h_{3}(x,t)t
	\end{align*}
for $t\geq 0$. \\
a) In this case, from Lemma \ref{L1} $(5)$	we have
\begin{align*}
-H_{1}(x,t)+\dfrac{\theta}{p+1}h_{1}(x,t)t\geq -H_{1}(x,t)&\geq -\dfrac{a(x)}{1-\gamma}g^{1-\gamma}(w+t)\\
&\geq -\dfrac{a(x)}{1-\gamma}(w+t)^{1-\gamma}\\
&\geq -\dfrac{a(x)}{1-\gamma}(w^{1-\gamma}+t^{1-\gamma})\\
&\geq -\dfrac{\Vert aw^{1-\gamma}\Vert_{\infty}}{1-\gamma}-\dfrac{a(x)}{1-\gamma}t^{1-\gamma}
\end{align*}
and since $p+1>\theta$, 
\begin{equation}\label{32}
-H_{3}(x,t)+\dfrac{\theta}{p+1}h_{3}(x,t)t=(1-\dfrac{\theta}{p+1})(a(x) (g(w))^{-\gamma}g^{\prime}(w)+\lambda(g(w))^{p}g^{\prime}(w))t\geq 0.
\end{equation}
Let us observe that this inequality is valid for all $\gamma>0$. 

Now, let us estimate $-H_{2}(x,t)+\dfrac{\theta}{p+1}h_{2}(x,t)t$. From Lemma \ref{L1} $(4)$ one has 
\begin{align*}
-H_{2}(x,t)+\dfrac{\theta}{p+1}h_{2}(x,t)t&=\dfrac{\lambda}{p+1}(-g^{p+1}(w+t)+g^{p+1}(w))\\
&+\dfrac{\theta \lambda}{p+1}(g(w+t))^{p}g^{\prime}(w+t)t\\
&\geq \dfrac{\lambda}{p+1}\left[-g^{p+1}(w+t)+\dfrac{\theta }{2}\dfrac{g^{p+1}(w+t)t}{w+t}+g^{p+1}(w)\right]\\
&\geq \dfrac{\lambda}{p+1}\left[g^{p+1}(w+t)(-1+\dfrac{\theta }{2}\dfrac{t}{\Vert w\Vert_{\infty}+t})+g^{p+1}(w)\right],
\end{align*}
and therefore 
$$-H_{2}(x,t)+\dfrac{\theta}{p+1}h_{2}(x,t)t>0,$$
for all $t>\overline{t}:=(2\Vert w \Vert_{\infty})/(\theta-2)$. 

Moreover, for $0\leq t\leq \overline{t}$,  by using Lemma \ref{L1} $(5)$ we get
\begin{align*}
-H_{2}(x,t)+\dfrac{\theta}{p+1}h_{2}(x,t)t&\geq -\dfrac{\lambda}{p+1}g^{p+1}(w+t)\\
&\geq -\dfrac{\lambda}{p+1}(w+t)^{p+1}\geq -\dfrac{\lambda}{p+1}(\Vert w \Vert_{\infty}+\overline{t})^{p+1}.
\end{align*}

By setting $c_{1}=-\dfrac{\lambda}{p+1}(\Vert w \Vert_{\infty}+\overline{t})^{p+1}$, we have proved that 
\begin{equation}\label{33}
-H_{2}(x,t)+\dfrac{\theta}{p+1}h_{2}(x,t)t\geq c_{1}, ~\mbox{for all}~t\geq 0.
\end{equation}
Let us observe that this inequality is valid independent of $\gamma>0$.

In view  of the above inequalities we deduce that
$$-G_{\lambda}(x,t)+\dfrac{\theta}{p+1}g_{\lambda}(x,t)t\geq c-\dfrac{a(x)}{1-\gamma}t^{1-\gamma} ~\mbox{for all}~t\geq 0,$$
where $c=-\dfrac{\Vert aw^{1-\gamma}\Vert_{\infty}}{1-\gamma}+c_{1}$.\\
b) When $\gamma=1$, by Lemma \ref{L1} $(5)$,  one has the inequality
\begin{align*}
-H_{1}(x,t)+\dfrac{\theta}{p+1}h_{1}(x,t)t&\geq -H_{1}(x,t)= -a(x)ln(g(w+t))+a(x)ln(g(w))\\
&\geq -a(x)(w+t)+a(x)ln(g(w))\\
&\geq -\Vert aw\Vert_{\infty}-a(x)t+a(x)ln(g(w)),
\end{align*}
which combined with \eqref{32} and \eqref{33} yield
$$-G_{\lambda}(x,t)+\dfrac{\theta}{p+1}g_{\lambda}(x,t)t\geq c-a(x)t +a(x)ln(g(w))$$
 for some constant $c\in \mathbb{R}$.\\
c) Indeed, the inequality
\begin{align*}
-H_{1}(x,t)+\dfrac{\theta}{p+1}h_{1}(x,t)t\geq -H_{1}(x,t)&=-\dfrac{a(x)}{1-\gamma}g^{1-\gamma}(w+t)+\dfrac{a(x)}{1-\gamma}g^{1-\gamma}(w)\\
&\geq \dfrac{a(x)}{1-\gamma}g^{1-\gamma}(w),
\end{align*}
combined with \eqref{32} and \eqref{33} yield 
$$-G_{\lambda}(x,t)+\dfrac{\theta}{p+1}g_{\lambda}(x,t)t\geq c+\dfrac{a(x)}{1-\gamma}g^{1-\gamma}(w) $$
 for some constant $c\in \mathbb{R}$. This concludes the proof.

\end{proof}

We are now ready to prove Theorem \ref{T2}.

\textbf{Proof of Theorem \ref{T2}} By Lemma \ref{L32} $u_{0}=0$ is a local minimizer of $\mathcal{J}_{\lambda}$ with respect to the topology of $H_{0}^{1}(\Omega)$. In the case where $u_{0}$ is not a strict local minimizer of $\mathcal{J}_{\lambda}$, we deduce the existence of further critical points of $\mathcal{J}_{\lambda}$, and then we are done. In this way, we may assume that 
\begin{equation}\label{GM1}
u_{0}=0~\mbox{is a strict local minimizer of}~\mathcal{J}_{\lambda}.
\end{equation}

 For all $t>1$ we have
$$\mathcal{J}_{\lambda}(t\phi_{1})=J_{\lambda}(w+t\phi_{1})-J_{\lambda}(w)$$
and by Lemma \ref{L300} it follows that 

\begin{align*}
\mathcal{J}_{\lambda}(t\phi_{1})\leq \frac{1}{2}\Vert w+t\phi_{1}\Vert^{2}-\displaystyle \int_{\Omega}a(x) G( w+t\phi_{1})-\frac{t^{(p+1)/2}\lambda}{p+1} \int_{\Omega}g^{p+1}(\dfrac{w}{t}+\phi_{1})-J_{\lambda}(w)
\end{align*}
and using again Lemma \ref{L300} and the Lebesgue dominated convergence theorem we yield $\displaystyle \lim_{t \to \infty}\mathcal{J}(t\phi_{1})=-\infty$. From this and \eqref{GM1}, we conclude that $\mathcal{J}_{\lambda}$ has the  mountain pass geometry (see \cite{ARY}, Theorem 2.1). It remains to prove the Palais-Smale condition. Let $4<2\theta<p+1$. Let $v_{n}\in H_{0}^{1}(\Omega)$ be such that $\mathcal{J}_{\lambda}(v_{n})\to c$ ($c\in \mathbb{R}$) and $\mathcal{J}^{\prime}_{\lambda}(v_{n})\to 0$. From the former, respectively the latter multiplied by $\theta v_{n}/(p+1)$, we get
$$\frac{1}{2}\Vert v_{n}\Vert^{2}\displaystyle -\displaystyle \int_{\Omega}G_{\lambda}(x,v_{n})=c+o(1),  $$

$$o(1)\Vert v_{n}\Vert \geq \vert \dfrac{\theta}{p+1}\Vert  v_{n}\Vert^{2}\displaystyle -\dfrac{\theta}{p+1}\displaystyle \int_{\Omega}g_{\lambda}(x,v_{n})v_{n} \vert\geq \dfrac{-\theta}{p+1}\Vert  v_{n}\Vert^{2}\displaystyle +\dfrac{\theta}{p+1}\displaystyle \int_{\Omega}g_{\lambda}(x,v_{n})v_{n},$$
and therefore (remember that $G_{\lambda}(x,t)=g_{\lambda}(x,t)t=0$ for $t\leq0$), 
\begin{align*}
c+o(1)+o(1)\Vert v_{n}\Vert \geq (\dfrac{1}{2}-\dfrac{\theta}{p+1})\Vert  v_{n}\Vert^{2}+\displaystyle \int_{\Omega}(-G_{\lambda}(x,v^{+}_{n})+\dfrac{\theta}{p+1}g_{\lambda}(x,v^{+}_{n})v^{+}_{n}).  
\end{align*}
From this and Lemma \ref{sps} we deduce that
\begin{equation*}
c+o(1)+o(1)\Vert v_{n}\Vert \geq\left\{
\begin{array}{l}
(\dfrac{1}{2}-\dfrac{\theta}{p+1})\Vert  v_{n}\Vert^{2}+ c\vert \Omega \vert-\displaystyle \int_{\Omega}\dfrac{a(x)}{1-\gamma}(v_{n}^{+})^{1-\gamma}  ~\mbox{if} ~ \gamma <1,\\
(\dfrac{1}{2}-\dfrac{\theta}{p+1})\Vert  v_{n}\Vert^{2}+ c\vert \Omega \vert-\displaystyle \int_{\Omega}a(x)v_{n}^{+} +\displaystyle \int_{\Omega}a(x)ln(g(w))\\
 ~\mbox{if} ~ \gamma=1,\\
(\dfrac{1}{2}-\dfrac{\theta}{p+1})\Vert  v_{n}\Vert^{2}+c\vert \Omega \vert +\displaystyle \int_{\Omega}\dfrac{a(x)}{1-\gamma}g^{1-\gamma}(w)  ~\mbox{if} ~  \gamma>1. 
\end{array}
\right.
\end{equation*}
Thus, in any  case, by the Sobolev embedding theorem we have that the sequence $\left\{v_{n}\right\}$ is bounded in $H_{0}^{1}(\Omega)$ and a standard argument shows that, up to a subsequence, there exists $v\in H_{0}^{1}(\Omega)$ such that $v_{n}\to v$ in $H_{0}^{1}(\Omega)$. Therefore, the Palais-Smale condition has been verified. 

Finally, an application of the mountain pass theorem  yields a nontrivial critical point $v$ of $\mathcal{J}_{\lambda}$ (see \cite{ARY}, Theorem 2.1) and by elliptic regularity $v\in C_{0}^{1}(\overline{\Omega})$. Moreover, since $g_{\lambda}(x,t)=0$ for $t\leq0$ one has $-\Vert v^{-}\Vert^{2}=0$, which implies that $v\gneqq 0$ and  $z=w+v\in C_{0}^{1}(\overline{\Omega})$ is a second solution of \eqref{pa}. This finishes the proof of Theorem \ref{T2}.

We end this section with the following proposition.
\begin{proposition}\label{p0}
Suppose that  $(H)_{\infty}$ and $3<p<22^{\ast}-1$ hold.	If $0<\gamma<1$, then $\lambda_{\ast}\in \mathcal{L}$.
\end{proposition}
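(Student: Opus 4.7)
The plan is to realize a solution at the extremal value $\lambda_*$ as a weak $H_0^1$-limit of the solutions produced by Theorem \ref{T1} for parameters just below $\lambda_*$. Fix $\lambda_n \uparrow \lambda_*$ and let $v_n := v_{\lambda_n}$ be the corresponding solutions. I aim to show that, along a subsequence, $v_n$ converges to a weak solution of \eqref{pa} at $\lambda = \lambda_*$, which by elliptic regularity lies in $C_0^1(\overline{\Omega})$.

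The crux is a uniform $H_0^1$-bound on $\{v_n\}$. Since $v_n$ is the global minimizer on $D$ of the truncated functional $I_{\lambda_n}$ provided by Theorem \ref{TS} (with $\underline{v} = v_0$ and some supersolution $\overline{v}_n$), a direct computation of $\tilde{G}(x,w)$ in the range $v_0 \leq w \leq \overline{v}_n$ yields an identity of the form $I_{\lambda_n}(w) = J_{\lambda_n}(w) - \lambda_n K$ with $K$ depending only on $v_0$. Hence the minimality $I_{\lambda_n}(v_n) \leq I_{\lambda_n}(v_0)$ transfers to $J_{\lambda_n}(v_n) \leq J_{\lambda_n}(v_0) \leq C$, uniformly in $n$. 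Testing the equation for $v_n$ against $v_n$ itself and using Lemma \ref{L1}(4) gives
$$\frac{A_n + B_n}{2} \leq \|v_n\|^2 \leq A_n + B_n, \qquad A_n := \int_\Omega a\, g^{1-\gamma}(v_n), \quad B_n := \lambda_n \int_\Omega g^{p+1}(v_n).$$
Inserting the lower estimate of $\|v_n\|^2$ into $J_{\lambda_n}(v_n) \leq C$ and collecting terms produces
$$\left(\frac{1}{4} - \frac{1}{p+1}\right) B_n \leq C + \left(\frac{1}{1-\gamma} - \frac{1}{4}\right) A_n.$$
The coefficient of $B_n$ is strictly positive because $p > 3$; and because $\gamma < 1$ and $a \in L^\infty(\Omega)$ (a consequence of $(H)_{\infty}$), H\"older together with Sobolev embedding produces $A_n \leq C_1 \|v_n\|^{1-\gamma}$. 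Therefore $\|v_n\|^2 \leq A_n + B_n \leq C_2 + C_3 \|v_n\|^{1-\gamma}$, and the strict inequality $1-\gamma < 2$ forces $\{v_n\}$ to be bounded in $H_0^1(\Omega)$.

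Along a subsequence, $v_n \rightharpoonup v_*$ in $H_0^1(\Omega)$, $v_n \to v_*$ in $L^s(\Omega)$ for every $s < 2^*$, and $v_n \to v_*$ a.e.\ in $\Omega$. Since $v_n \geq v_0$ for all $n$, the limit satisfies $v_* \geq v_0 > 0$ a.e., so positivity is automatic. For the passage to the limit in the weak formulation of \eqref{pa}, the singular term is controlled via dominated convergence because the monotonicity of $t \mapsto g^{-\gamma}(t) g'(t)$ and $v_n \geq v_0$ give $a g^{-\gamma}(v_n) g'(v_n) \leq a g^{-\gamma}(v_0) g'(v_0)$, which lies in $L^\infty(\Omega)$ thanks to $(H)_{\infty}$; the superlinear term is handled via Vitali's theorem, using the asymptotic bound $g^p(v) g'(v) \leq C(1 + v^{(p-1)/2})$ together with the strict condition $(p-1)/2 < 2^*-1$ (equivalent to $p < 22^*-1$) to deduce equi-integrability from the $L^{2^*}$-bound on $\{v_n\}$. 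This identifies $v_*$ as a weak solution at $\lambda = \lambda_*$. A Brezis-Kato bootstrap relying on the same subcritical estimate gives $v_* \in L^\infty(\Omega)$, and classical elliptic regularity then yields $v_* \in C_0^1(\overline{\Omega})$, so $\lambda_* \in \mathcal{L}$.

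The main obstacle is the uniform $H_0^1$-bound on $\{v_n\}$: because $J_{\lambda_n}$ mixes a non-standard singular term with a non-standard superlinear term, the usual Nehari-type manipulations are not directly available. The argument succeeds precisely because the strict inequalities $0 < \gamma < 1$ and $p > 3$ are assumed: the former keeps $A_n$ sublinear in $\|v_n\|$ and so prevents the singular part from absorbing the kinetic energy, while the latter supplies the positive coefficient in front of $B_n$ that is needed to cap the superlinear contribution of the energy.
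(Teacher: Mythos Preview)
Your proposal is correct and follows essentially the same route as the paper's proof: take $\lambda_n\uparrow\lambda_*$, use the minimizing property of $v_{\lambda_n}$ for the truncated functional $I_{\lambda_n}$ together with the identity $J_{\lambda}=I_{\lambda}+\lambda K$ on $[\underline v,\overline v]$ (with $K$ depending only on $\underline v=v_0$) to get a uniform upper bound $J_{\lambda_n}(v_n)\le C$, combine this bound with the equation tested against $v_n$ to obtain the uniform $H_0^1$-estimate, and then pass to the limit. The only cosmetic differences are that the paper combines the energy bound and the equation via a parameter $\theta$ with $4<2\theta<p+1$ (and uses $I_{\lambda_n}(v_n)<0$ from Remark~\ref{rem2}~b) rather than comparison with $I_{\lambda_n}(v_0)$), whereas you use the two-sided inequality $\tfrac12(A_n+B_n)\le\|v_n\|^2\le A_n+B_n$ coming from Lemma~\ref{L1}(4); both manipulations exploit $p>3$ and $0<\gamma<1$ in the same way. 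Your treatment of the limit passage (dominated convergence for the singular term via $v_n\ge v_0$, Vitali for the superlinear term using $(p-1)/2<2^*-1$) is slightly more explicit than the paper's, which simply invokes ``Lebesgue's theorem'' and then refers to \cite{R} for the $C_0^1$-regularity.
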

\begin{proof}
	In order to prove the proposition one uses the following properties:
	\begin{itemize}
		\item if $v_{\lambda}$ is the solution obtained in Theorem \ref{T1}, then $J_{\lambda}(v_{\lambda})<c$ for some constant $c>0$ independent of $\lambda \in (0,\lambda_{\ast})$. Indeed, as we have seen in the proof of Lemma \ref{minl},
		$$J_{\lambda}(v_{\lambda})=I_{\lambda}(v_{\lambda})+\lambda \displaystyle \int_{\Omega} g^{p}(\underline{v}(x))g^{\prime}(\underline{v}(x))\underline{v}(x)dx-\frac{\lambda}{p+1} \displaystyle \int_{\Omega}g^{p+1}(\underline{v}(x))dx,$$
		
which, jointly with Remark \ref{rem2} $b)$, gives
$$J_{\lambda}(v_{\lambda})\leq \lambda_{\ast} \displaystyle \int_{\Omega} g^{p}(\underline{v}(x))g^{\prime}(\underline{v}(x))\underline{v}(x)dx:=c.	$$	
	\item $\dfrac{-g^{p+1}(t)}{p+1}+\dfrac{\theta}{p+1}g^{p}(t)g^{\prime}(t)t\geq 0$ for all $t>0$ and $4<2\theta<p+1$. Indeed, from Lemma \ref{L1} $(4)$ we get 
		\begin{equation}\label{fin}
		\dfrac{-g^{p+1}(t)}{p+1}+\dfrac{\theta}{p+1}g^{p}(t)g^{\prime}(t)t\geq \dfrac{g^{p+1}(t)}{p+1}\left(-1+\dfrac{\theta}{2}\right)>0 ,
		\end{equation}
		for all $t>0$. 
	\end{itemize}
	Now, let $\lambda_{n}\in (0,\lambda_{\ast})$ be an increasing sequence such that $\lambda_{n}\to \lambda_{\ast}$ as $n\to \infty$ and let $v_{n}:=v_{\lambda_{n}}$ be a solution of \eqref{pa} obtained in Theorem \ref{T1} for $\lambda=\lambda_{n}$. Then 
	$$J_{\lambda_{n}}(v_{n})=\frac{1}{2}\Vert v_{n}\Vert^{2}-\displaystyle \int_{\Omega}a(x) G( v_{n})-\frac{\lambda_{n}}{p+1} \displaystyle \int_{\Omega}g^{p+1}(v_{n})<c,$$
for some constant $c>0$ independent of $\lambda_{n}$	and
	$$\Vert v_{n}\Vert^{2}-\displaystyle \int_{\Omega}a(x) (g(v_{n}))^{-\gamma}g^{\prime}(v_{n})v_{n} - \lambda_{n}\displaystyle \int_{\Omega}(g(v_{n}))^{p}g^{\prime}(v_{n})v_{n} =0.$$
	
	Thus, by using \eqref{fin}, one deduces 
	$$\left(\frac{1}{2}-\dfrac{\theta}{p+1}\right)\Vert v_{n}\Vert^{2}-\dfrac{1}{1-\gamma}\displaystyle \int_{\Omega}a(x)  g^{1-\gamma}(v_{n})+\dfrac{\theta}{p+1}\displaystyle \int_{\Omega}a(x) (g(v_{n}))^{-\gamma}g^{\prime}(v_{n})v_{n} <c,$$
	whence, by Lemma \ref{L1} $(3)$,
	$$\left(\frac{1}{2}-\dfrac{\theta}{p+1}\right)\Vert v_{n}\Vert^{2}<\dfrac{1}{1-\gamma}\displaystyle \int_{\Omega}a(x)  g^{1-\gamma}(v_{n})+c\leq \dfrac{\Vert a\Vert_{\infty}}{1-\gamma}\displaystyle \int_{\Omega}  v_{n}^{1-\gamma}+c.$$
	From the previous relation it is easy to see that $\left\{v_{n}\right\}$ is bounded in $H_{0}^{1}(\Omega)$. Thus, there exists $v^{\ast}\in H_{0}^{1}(\Omega)$ such that, up to a subsequence, we have as $n \to \infty$
	$$v_{n}\rightharpoonup v^{\ast}~\mbox{in}~H_{0}^{1}(\Omega),$$
	$$v_{n}\to v^{\ast}~\mbox{a.e. in}~ \Omega.$$
	
	Remember that $v_{n}\geq \underline{v}=v_{0}$ in $\Omega$ and thus, by Lemma \ref{L1} $(9),(11)$, $\vert a(x) (g(v_{n}))^{-\gamma}g^{\prime}(v_{n})\psi \vert \leq  \vert a(x) (g(v_{0}))^{-\gamma}g^{\prime}(v_{0})\psi \vert $ in $\Omega$.

	Because $v_{n}$ is a solution of $(Q_{\lambda_{n}})$, we have
$$
\int_{\Omega} \nabla v_{n} \nabla \psi =\int_{\Omega}  a(x) (g(v_{n}))^{-\gamma}g^{\prime}(v_{n})\psi +\lambda_{n}\int_{\Omega} (g(v_{n}))^{p}g^{\prime}(v_{n})\psi,$$
for all $\psi \in H_{0}^{1}(\Omega)$. Passing to the limit in the previous equality and using Lebesgue's theorem, we deduce that $v^{\ast}$ is a weak solution of $(Q_{\lambda_{\ast}})$. Finally, we can adapt the arguments in the proof of Theorem 1 $c)$ in \cite{R} to obtain $v^{\ast}\in C^{1}_{0}(\overline{\Omega})$. This ands the proof of the proposition.
\end{proof}
Proposition \ref{p0} suggests that $\lambda_{\ast}\in \mathcal{L}$ for arbitrary $\gamma>0$. However, for $\gamma>1$ and $\lambda \in (0,\lambda_{\ast})$ one has $J_{\lambda}(v)>0$ for any solution $v$ of \eqref{pa}, and thus the proof of Proposition \ref{p0} cannot be applied to deduce that $\lambda_{\ast}\in \mathcal{L}$.

{\it\small  Ricardo Lima Alves}\\{\it\small  Unidade Acad\^emica de Matem\'atica}\\
	{\it \small Universidade Federal de Campina Grande}\\
	{\it \small CEP: 58429-900, Campina Grande, PB-Brazil}\\{\it\small e-mail: ricardoalveslima8@gmail.com}


\begin{thebibliography}{D}
	
	\bibitem{ABC}
	A. Ambrosetti, H. Brezis, G. Cerami,
	\textit{ Combined effects of concave and convex nonlinearities in some elliptic problems.} J. Funct. Anal. \textbf{122} (1994), 519-543.
	\bibitem{ARY} A. Ambrosetti, P. H. Rabinowitz, {\it Dual variational methods in critical point theory and applications}, J. Funcfional Analysis 14 (1973). 349-38 I.
	
	
	
	\bibitem{AR} R.L. Alves, M. Reis, {\it About existence and regularity of positive solutions for a Quasilinear Schr\"odinger equation with singular }, Electronic Journal of Qualitative Theory of Differential Equations 60 (2020), https://doi.org/10.14232/ejqtde.2020.1.60.
	
	\bibitem{SLS} R.L. Alves, C. A. Santos, K. Silva, {\it Multiplicity of negative-energy solutions for singular-superlinear Schr\" odinger equations with indefinite-sign potential,} arXiv:1811.03365 (to appear in Communications in Contemporary Mathematics).
	
	
	
	
	\bibitem{AGS} C.O. Alves, G.M. Figueiredo,  U.B. Severo, {\it A result of multiplicity of solutions
	for a class of quasilinear equations}, Proc. Edinburgh Math. Soc. 55 (2012) 291–309.
	\bibitem{BGMS} K. Bal, P. Garain, I. Mandal, K. Sreenadh, {\it Multiplicity result to a singular quasilinear Schr\"odinger equation} , Journal of Mathematical Analysis and Applications, \textbf{497} (2021), No.~2, 124904.
	\bibitem{BG} A.V. Borovskii, A.L. Galkin, {\it Dynamical modulation of an ultrashort high-intensity laser pulse in matter}, JETP 77 (1993) 562--573.
	\bibitem{BHS} A. De Bouard, N. Hayashi, J.-C. Saut, {\it Global existence of small solutions to a relativistic nonlinear Schr\"odinger equation}, Comm. Math. Phys. 189 (1997) 73–105.
	\bibitem{BMMLB}H.S. Brandi, C. Manus, G. Mainfray, T. Lehner, G. Bonnaud, {\it Relativistic and ponderomotive self-focusing of a laser beam in a radially inhomogeneous plasma}, Phys. Fluids B 5 (1993)3539--3550.
	
	\bibitem{BN} H. Brezis, L. Nirenberg, {\it $H^{1}$ versus $C^{1}$ local minimizers}, C.R. Acad. Sci. Paris 317 (1993) 465-472.
	\bibitem{CS} X.L. Chen, R.N. Sudan, {\it Necessary and sufficient conditions for self-focusing of short ultraintense laser pulse}, Phys. Rev. Lett. 70 (1993) 2082--2085.
	
	\bibitem{C} M. Chipot, {\it Elliptic equations: an introductory course}, Birkh\"auser Advanced Texts: BaslerLehrb\"ucher. [Birkh\"auser Advanced Texts: Basel Textbooks], Birkh\"auser Verlag, Basel, 2009.
	\bibitem{CJ}M. Colin, L. Jeanjean, {\it Solutions for a quasilinear Schr\"odinger equation: a dual approach}, Nonlinear Anal. 56 (2004) 213–226.
	\bibitem{DM} J.M.B. do \'O, A. Moameni, {\it Solutions for singular quasilinear Schr\"odinger equations with one parameter}, Commun. Pure Appl. Anal. 9 (2010), no. 4, 1011-1023.
	\bibitem{GP} L. Gasinski, N. S. Papageorgiou; {\it Nonlinear  Analysis}, Chapman and Hall/CRC, Boca Raton, FL, 2006.
	
	
	
\bibitem{GST}
J. Giacomoni, I. Schindler, P. Takac, {\it Sobolev versus H\"older local minimizers and existence of multiple solutions for a singular quasilinear equation}, Ann. Sc. Normale Super. Pisa, Ser. V6 (2007), 117--158.

\bibitem{GS} 
J. Giacomoni, K. Saoudi, {\it $W^{1,p}_{0}$ versus $C^{1}$ local minimizers for a singular and critical functional}, J. Math. Anal. Appl. \textbf{363}(2)(2010), 697--710. doi 10.1016/j.jmaa.2009.10.012.


	\bibitem{H} D.D. Hai, {\it On an asymptotically linear singular boundary value problems}, Topo. Meth.
	in Nonlin. Anal. 39 (2012), 83-92.
	\bibitem{H} Y. Haitao, \textit{ Multiplicity and asymptotic behavior of positive solutions for a singular semilinear
		elliptic problem.} J. Differential Equations \textbf{189} (2003), 487-512.
	\bibitem{HSS} N. Hirano, C. Saccon, N. Shioji, \textit{ Existence of multiple positive solutions for singular  elliptic problems with a concave and convex nonlinearities.} Adv. Differential Equations \textbf{9} (2004), 197-220.
	\bibitem{IY} Il'yasov, Y.S. {\it Bifurcation calculus by the extended functional method}. Funct Anal Its Appl 41, 18--30 (2007). https://doi.org/10.1007/s10688-007-0002-2
		\bibitem{K}  S. Kurihura, {\it Large-amplitude quasi-solitons in superfluid films}, J. Phys. Soc. Jpn 50 (1981)3262--3267.
		\bibitem{LSS}  E.W. Laedke, K.H. Spatschek, L. Stenflo, {\it Evolution theorem for a class of perturbed envelope soliton solutions}, J. Math. Phys. 24 (1983) 2764--2769.
		\bibitem{L} D. Liu, {\it Soliton solutions for a quasilinear Schr\"odinger equation}, Electron. J. Differ. Equ. 267 (2013) 1–13.
		\bibitem{R} Lima Alves, R. {\it Existence of Positive Solution for a Singular Elliptic Problem with an Asymptotically Linear Nonlinearity}. Mediterr. J. Math. 18, 4 (2021). https://doi.org/10.1007/s00009-020-01646-9
		
	\bibitem{JDP} J. Liu, D. Liu, P. Zhao, {\it Soliton solutions for a singular schcr\"odinger equation with any groth exponents}, Acta Appl. Math. 148 (2017), 179-199.
	\bibitem{LLL} J. Liu, D. Liu, {\it Multiple soliton solutions for a quasilinear Schr\"odinger equation}, Indian Journal of Pure and Applied Mathematics 48 (2017) 75-90.
	Indian Acad. Sci. Math. Sci., 125 (2015), 307-321.
\bibitem{PRR} N.S. Papageorgiou, V.D. R$\breve{a}$dulescu, D.D. Repov$\breve{s}$, \textit{ Pairs of positive solutions for resonant singular equations with the $p$-Laplacian}. Electron. J. Differ. Equ. \textbf{249} (2017), 13.
\bibitem{PW} N.S. Papageorgius, P. Winkert, \textit{ Singular $p$-Laplacian equations with superlinear perturbation.} J. Differential Equations, \textbf{266} (2019), 1462-1487.


	\bibitem{BR} B. Ritchie, {\it Relativistic self-focusing an d channel formation in laser-plasma interactions}, Phys. Rev. E 50 (1994) 687--689.
\bibitem{SFS} 
Dos Santos, G., Figueiredo, G. M., Severo, U. B., {\it Multiple solutions for a class of singular quasilinear problems}, Journal of Mathematical Analysis and Applications, \textbf{480} (2019), No.~2, 123405. https://doi.org/10.1016/j.jmaa.2019.123405.

	
	\bibitem{U}  U. B. Severo, {\it Existence  of  weak  solutions  for  quasilinear  elliptic  equations  involvingt he $p$-Laplacian}, Electron. J. Differential Equations (2008), no. 56, 1--16.
	\bibitem{V} J.L. V$\acute{a}$zquez, \textit{ A strong maximum principle for some quasilinear elliptic equations.} Appl. Math. Optim. \textbf{12} (1984), 191-201.
	\bibitem{W} L.L. Wang, {\it Existence and uniqueness of solutions to singular quasilinear Schr\"odinger equations}, Electron. J. Differ. Equ. 38 (2018) 1–9.
	\bibitem{MP} S.A. Marano,  N.S.  Papageorgiou, {\it Positive  solutions  to  a  Dirichlet  problem  with $p$-Laplacian  and concave-convex nonlinearity depending on a parameter}.Commun.  Pure Appl.  Anal. 2013, 12, 815-829.

\end{thebibliography}
\end{document}